\newcommand{\Dleft}{[\hspace{-1.5pt}[}
\newcommand{\Dright}{]\hspace{-1.5pt}]}
\newcommand{\SN}[1]{\Dleft #1 \Dright}
\newcommand{\Id}{\mathbbmss{1}}
\newcommand{\rmd}{\textnormal{d}}
\newcommand{\rmh}{\textnormal{h}}
\newcommand{\rml}{\textnormal{l}}
\newcommand{\rmA}{\textnormal{A}}
\newcommand{\rmg}{\textnormal{g}}
\newcommand{\Lie}{\textnormal{Lie}}
\newcommand{\catname}[1]{\textnormal{\texttt{#1}}}
\def\Sec{\mathtt{Sec}}
\def\Vect{\mathtt{Vect}}
\def\sT{{\mathtt T}}
\def\xd{\operatorname{d}}
\numberwithin{equation}{section}
\newtheorem{Theorem}{Theorem}[section]
\newtheorem{Proposition}[Theorem]{Proposition}
 { \theoremstyle{definition}
\newtheorem{Definition}[Theorem]{Definition}
\newtheorem{Example}[Theorem]{Example}
\newtheorem{Remark}[Theorem]{Remark} }
\begin{document}

\allowdisplaybreaks

\renewcommand{\thefootnote}{$\star$}

\newcommand{\arXivNumber}{1502.06092}

\renewcommand{\PaperNumber}{090}

\FirstPageHeading

\ShortArticleName{Graded Bundles in the Category of Lie Groupoids}

\ArticleName{Graded Bundles in the Category of Lie Groupoids\footnote{This paper is a~contribution to the Special Issue
on Poisson Geometry in Mathematics and Physics.
The full collection is available at \href{http://www.emis.de/journals/SIGMA/Poisson2014.html}{http://www.emis.de/journals/SIGMA/Poisson2014.html}}}

\Author{Andrew James {BRUCE}~$^\dag$,  Katarzyna {GRABOWSKA}~$^\ddag$ and Janusz {GRABOWSKI}~$^\dag$}

\AuthorNameForHeading{A.J.~Bruce, K.~Grabowska and J.~Grabowski}

\Address{$^\dag$~Institute of Mathematics, Polish Academy of Sciences, Poland}
\EmailD{\href{mailto:andrewjamesbruce@googlemail.com}{andrewjamesbruce@googlemail.com}, \href{mailto:jagrab@impan.pl}{jagrab@impan.pl}}

\Address{$^\ddag$~Faculty of Physics, University of Warsaw, Poland}
\EmailD{\href{mailto:konieczn@fuw.edu.pl}{konieczn@fuw.edu.pl}}

\ArticleDates{Received February 25, 2015, in f\/inal form November 05, 2015; Published online November 11, 2015}

\Abstract{We def\/ine and make initial study of  Lie groupoids equipped with a compatible homogeneity (or graded bundle) structure, such objects we will refer to as \emph{weighted Lie groupoids}.  One can think of weighted Lie groupoids as graded  manifolds in the category of Lie groupoids.  This is a very rich geometrical theory with numerous natural examples. Note that $\mathcal{VB}$-groupoids, extensively studied in the recent literature, form just the particular case of weighted Lie groupoids of degree one. We examine the Lie theory related to weighted groupoids and \emph{weighted Lie algebroids}, objects def\/ined in a previous publication of the authors, which are graded  manifolds in the category of Lie algebroids, showing that they are naturally related via dif\/ferentiation and integration.  In this work we also make an initial study of \emph{weighted Poisson--Lie groupoids} and \emph{weighted Lie bi-algebroids}, as well as \emph{weighted Courant algebroids}.}

\Keywords{graded manifolds; homogeneity structures; Lie groupoids; Lie algebroids}

\Classification{22A22; 55R10; 	58E40;  58H05}

\renewcommand{\thefootnote}{\arabic{footnote}}
\setcounter{footnote}{0}

\section{Introduction}\label{sec:Intro}
Lie groupoids and Lie algebroids are ubiquitous throughout dif\/ferential geometry and are playing an ever increasing r\^{o}le in mathematical physics.  Lie groupoids provide a unifying framework to discuss diverse topics in modern geometry including the theory of group actions, foliations, Poisson geometry, orbifolds, principal bundles, connection theory  and so on.  The inf\/initesimal counterpart to Lie groupoids are  Lie algebroids. The Lie theory here is very rich and not as simple as the corresponding Lie theory for   Lie groups and Lie algebras. In particular, while it is true  that  every Lie groupoid  can be dif\/ferentiated to obtain a Lie algebroid, the reverse procedure of  global integration has an obstruction \cite{Crainic:2003,Mackenzie:1987}. Not all Lie algebroids can be globally  integrated to obtain a Lie groupoid,  although one can always integrate Lie algebroids to `local' Lie groupoids.

Another important notion in modern geometry is that of a graded manifold. Such notions  have their conception in the `super' context via the BV--BRST formalism of gauge theories in physics. We remark that Lie algebroids appear in physics as symmetries of f\/ield theories that do not arise from Lie groups or Lie algebras. Moreover, these symmetries cannot always be directly separated from the space of f\/ields. Such symmetries are naturally accommodated in the  BV--BRST formalism. The concept of  a graded manifold has  been put to good use in describing Lie algebroids, Lie bi-algebroids, Courant algebroids and related notions~\cite{Roytenberg:2001,Roytenberg:2006,Voronov:2001qf,Voronov:2012}.    In the purely commutative setting, Grabowski and Rotkiewicz~\cite{Grabowski2012} def\/ine what they referred to as \emph{graded bundles}. Loosely, a graded bundle is a natural generalisation of the concept of a vector bundle. We will discuss graded bundles in a little more detail shortly.

There has been some recent interest in $\mathcal{VB}$-groupoids and $\mathcal{VB}$-algebroids, see for example \cite{Brahic:2014,Bursztyn:2014,Gracia-Saz:2009,Gracia-Saz:2010}.  From a categorical point of view $\mathcal{VB}$-groupoids are vector bundles in the category of Lie groupoids and similarly $\mathcal{VB}$-algebroids are vector bundles in the category of Lie algebroids.   It is known, via \cite{Brahic:2014,Bursztyn:2014}, that the Lie functor restricts to the category of  $\mathcal{VB}$-groupoids and $\mathcal{VB}$-algebroids; that is we can dif\/ferentiate  $\mathcal{VB}$-groupoids to get $\mathcal{VB}$-algebroids and $\mathcal{VB}$-algebroids integrate to $\mathcal{VB}$-groupoids. Let us mention that $\mathcal{VB}$-groupoids and  $\mathcal{VB}$-algebroids have been studied in relation to Poisson geometry and representations. Indeed, representations of Lie groupoids on vector bundles gives rise to $\mathcal{VB}$-groupoids.

As graded bundles are generalisations of vector bundles, it seems natural that one should study graded bundles in the category of Lie groupoids and  Lie algebroids, including their Lie theory. The f\/irst work in this direction was by the authors of this paper, where various descriptions of so-called \emph{weighted Lie algebroids} were given in~\cite{Bruce:2014}. The motivation for that work was to uncover a practical notion of a ``higher Lie algebroid'' suitable for higher-order geometric mechanics in the spirit of Tulczyjew. We presented details of the Lagrangian and Hamiltonian formalisms on graded bundles using weighted Lie algebroids in a separate publication~\cite{Bruce:2014b}. As an application, the higher-order Euler--Lagrange equations on a Lie algebroid were derived completely geometrically, and in full agreement with the independent   approach of Mart\'{i}nez using variational calculus~\cite{Martinez:2015}. This and other results have convinced us of the potential for further applications of weighted Lie algebroids.

The natural question of integrating weighted Lie algebroids was not posed at all in~\cite{Bruce:2014} or~\cite{Bruce:2014b}. We address this question in this paper. We  def\/ine \emph{weighted Lie groupoids} as Lie groupoids that carry a compatible homogeneity structure, i.e., an action of the monoid ${\mathbb R}$ of multiplicative reals; we will make this precise in due course.   We show that these structures, which are a~generalisation of $\mathcal{VB}$-groupoids, are the objects that integrate weighted Lie algebroids.   Just as the tangent bundle of a Lie groupoid is a canonical example of a~$\mathcal{VB}$-groupoid,  the $k$-th order  higher tangent bundle of a~Lie groupoid is a canonical example of a weighted Lie groupoid.

The results found in this paper can  be viewed as the `higher order' generalisations of the results found in~\cite{Bursztyn:2014}. In particular, Bursztyn et al.~\cite{Bursztyn:2014} treat $\mathcal{VB}$-groupoids/algebroids as Lie groupoids/algebroids equipped with a compatible regular homogeneity structure.  The idea of simplifying $\mathcal{VB}$-`objects' by describing then in terms of regular homogeneity structures goes back to~\cite{Grabowski:2006}, where symplectic structures were studied  in this context.  By passing to graded bundles, we do not really  further simplify the notion of $\mathcal{VB}$-`objects', but by understanding them as part of a larger class of structures we simplify working with them.

Following our intuition, we also examine the notion of \emph{weighted Poisson--Lie groupoids} as  Poisson--Lie groupoids equipped with a compatible homogeneity structure. The inf\/initesimal versions of weighted Poisson--Lie groupoids are \emph{weighted Lie bi-algebroids}, a notion we carefully def\/ine in this paper. Associated with any Lie bi-algebroid is a Courant algebroid. We show that this notion very naturally passes over to the weighted case and motivates a more general notion of a \emph{weighted Courant algebroid}, which is a natural generalisation of a~$\mathcal{VB}$-Courant algebroid.  The use of a~compatible homogeneity structure provokes the question of replacing the monoid~${\mathbb R}$ by its subgroup~${\mathbb R}^\times$ of multiplicative reals $\ne 0$. We claim that this is a natural way of def\/ining contact and Jacobi groupoids, see~\cite{Bruce:2015}.

Summing up, one can say that in this paper we generalise the notion of $\mathcal{VB}$-`objects'  by passing to the category of graded bundles and categorical objects therein.  On the other hand, these concepts and their applications are far from being fully exploited. This work is only the f\/irst step in this direction.

\looseness=-1
\textbf{Our use of supermanifolds.} We recognise the power and elegance of the framework of supermanifolds in the context of algebroid-like objects and we will exploit this formalism. In particular we will make use of the so called $Q$-manifolds, that is supermanifolds equipped with a homological vector f\/ield. Although we will not dwell on fundamental issues from the theory of supermanifolds, we will technically follow the ``Russian school''  and understand supermanifolds in terms of locally superringed spaces. However, for the most part the intuitive and correct understanding of a supermanifold as a `manifold' with both commuting and anticommuting coordinates will suf\/f\/ice. When necessary we will denote the Grassmann parity of an object by `tilde'.

\textbf{Arrangement of paper.} In Section~\ref{sec:Preliminaries} we brief\/ly recall the necessary parts of theory of graded bundles, weighted Lie algebroids and Lie groupoids as needed in the rest of this paper.  In Section~\ref{sec:weighted Lie groupoids} we introduce the main objects of study, that is weighted Lie groupoids. Weighted Lie theory is the subject of Section~\ref{sec:Lie theory}. Finally in Section~\ref{sec:Weighted Poisson Groupoids} we apply some of the ideas developed earlier in this paper to weighted Poisson--Lie groupoids, weighted Lie bi-algebroids and weighted Courant algebroids.  We end  with some remarks on contact and Jacobi groupoids.

\section{Preliminaries}\label{sec:Preliminaries}

\looseness=-1
In this section we brief\/ly recall parts of the theory of graded bundles, $n$-tuple graded bundles and weighted algebroids as needed in later sections of this paper. Everything in this paper will be in the smooth category.  The interested reader should consult the original literature \cite{Bruce:2014,Grabowski2013,Grabowski:2006,Grabowski2012} for details, such as proofs of the statements made in this section.  We will also set some notation regarding Lie groupoids and  recall  the groupoid/algebroid version of Lie's second theorem, which was f\/irst proved by Mackenzie and Xu~\cite{Mackenzie:2000}. For an overview of the general theory of Lie groupoids and Lie algebroids the reader can consult Mackenzie~\cite{Mackenzie2005}. We will also very brief\/ly recall the notion of a $Q$-manifold and a $QS$-manifold as needed throughout this paper.

\subsection[Graded and $n$-tuple graded bundles]{Graded and $\boldsymbol{n}$-tuple graded bundles}

Manifolds and supermanifolds that carry various extra gradings on their structure sheaf are now an established part of modern geometry and mathematical physics. The general theory of graded manifolds in our understanding was formulated  by Voronov  in~\cite{Voronov:2001qf};  we must also mention the  works of Kontsevich \cite{Kontsevich:2003}, Mehta~\cite{Mehta:2006}, Roytenberg~\cite{Roytenberg:2001} and \v{S}evera~\cite{Severa:2005} where various notions of a~$\mathbb{Z}$-graded supermanifold appear. The graded structure on such (super)manifolds is conveniently encoded in the \emph{weight vector field} whose action via the Lie derivative counts the degree of tensor and tensor-like objects on the (super)manifold.

In this introductory section we will concentrate our attention on just genuine manifold and only sketch the theory for supermanifolds, as the extension of the results here to supermanifolds is almost straightforward. In later sections we will make use of supermanifolds that carry additional gradings coming from a \emph{homogeneity structure} which will be def\/ined in a moment.

An important class of graded manifolds are those that carry non-negative grading.  It will be convenient to denote homogeneous local coordinates in the form $(y^a_w)$ (or $(y^w_a)$), where $w=0,1,\dots,k$ is the weight of $y^w_a$.
A canonical example of such a structure is the bundle $\sT^kM=J^k_0({\mathbb R},M)$ of $k$-velocities, i.e., $k$th-jets (at $0$) of curves $\gamma\colon {\mathbb R}\to M$. We will furthermore require that (like in the case of $\sT^kM$) this grading is associated with a smooth action $h\colon {\mathbb R}_{\ge 0}\times F\to F$ of the monoid $({\mathbb R}_{\ge 0},\cdot)$ of multiplicative non-negative reals on a manifold $F$. Let us recall in this context that a function $f\in C^\infty(F)$ we call \emph{homogeneous of degree $k\in{\mathbb R}$} if
\begin{gather}\label{hmg}
h_t^*(f):=f(h(t,\cdot))=t^k f
\end{gather}
for all $t>0$. We call $k$  the \emph{weight}  or \emph{degree} of $f$.

Such actions are known as  \emph{homogeneity structures} in the terminology of  Grabowski and Rot\-kie\-wicz~\cite{Grabowski2012} who proved that only \emph{non-negative integer weights} are allowed, so the algebra $\mathcal{A}(F)\subset C^\infty(F)$ spanned by homogeneous function  has a canonical $\mathbb{N}$-grading, $\mathcal{A}(F) = \bigoplus_{i \in \mathbb{N}}\mathcal{A}^{i}(F)$. This algebra  is referred to as the \emph{algebra of polynomial functions} on~$F$.  This action reduced to ${\mathbb R}_{>0}$ is the one-parameter group of dif\/feomorphism integrating the weight vector f\/ield, thus the weight vector f\/ield is in this case \emph{${\rm h}$-complete}~\cite{Grabowski2013}.  Note also that the homogeneity structure always has a canonical extension to the action $h\colon {\mathbb R}\times F\to F$ of the monoid $({\mathbb R},\cdot)$ such that any homogeneous function $f$ of degree $k\in\mathbb{N}$ satisf\/ies~(\ref{hmg}) this time for all $t\in{\mathbb R}$; it will be convenient to speak about a homogeneity structure as  this extended action.

Importantly, we have  that for $t \neq 0$ the action $h_{t}$ is a dif\/feomorphism of $F$ and when $t=0$ it is a smooth surjection $\tau=h_0$ onto a submanifold $F_{0}=M$, with the f\/ibres being dif\/feomorphic to $\mathbb{R}^{N}$ (cf.~\cite{Grabowski2012}).  Thus, the objects obtained are particular kinds of \emph{polynomial bundles} $\tau\colon F\to M$, i.e., f\/ibrations which locally look like $U\times{\mathbb R}^N$ and the change of coordinates (for a certain choice of an atlas) are polynomial in~${\mathbb R}^N$. For this reason graded manifolds with non-negative weights and h-complete weight vector f\/ields are also known as \emph{graded bundles}~\cite{Grabowski2012}. Furthermore, the h-completeness condition  implies that  graded bundles are determined by the algebra of homogeneous functions on them.  Canonical examples of  graded bundles are higher tangent bundles~$\sT^kM$.
The canonical coordinates $(x^a,\dot x^b,\ddot x^c,\dots)$ on $\sT^kM$, associated with local coordinates $(x^a)$ on~$M$, have degrees, respectively, $0,1,2,\dots$, so the homogeneity structure reads
\begin{gather*}
h^{*}_t\big(x^a,\dot x^b,\ddot x^c,\dots\big)=\big(x^a,t\dot x^b,t^2\ddot x^c,\dots\big) .
\end{gather*}

A fundamental result is that any smooth action of the multiplicative monoid  $({\mathbb R},\cdot)$ on a~mani\-fold leads to a $\mathbb{N}$-gradation of that manifold. A little more carefully, the category of graded bundles is  equivalent to the category of $(\mathbb{R}, \cdot)$-manifolds and equivariant maps. A canonical construction of this correspondence goes as follows.
Take $ p \in F$ and consider $t \mapsto h_{t}(p)$ as a~smooth curve $\gamma_h^p$ in $F$. This curve meets~$M$ for $t=0$ and is constant for~$p\in M$.

\begin{Theorem}[\cite{Grabowski2012}]
For any homogeneity structure $h\colon {\mathbb R}\times F\to F$ there is $k\in\mathbb{N}$ such that
the map
\begin{gather}
\label{Phi}
\Phi_h^k\colon \ F\to \sT^k F_{|M} ,\qquad \Phi^k_h(p)={\mathtt j}^k_0(\gamma_h^p) ,
\end{gather}
is an equivariant $($with respect to the monoid actions of $({\mathbb R},\cdot)$ on~$F$ and~$\sT^k F)$ embedding of~$F$ into a graded submanifold of the graded bundle $\sT^k F_{|M}$. In particular, there is an atlas on~$F$ consisting of homogeneous function.
\end{Theorem}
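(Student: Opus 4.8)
The plan is to work throughout in homogeneous local coordinates $(y^a_w)$, $w=0,1,\dots,k$, on the graded bundle $F$ (their existence, together with the fact that only non-negative integer weights occur, being the Grabowski--Rotkiewicz input recalled above), and to take $k$ to be the top weight, i.e.\ the degree of $F$. The key observation is that the homogeneity relation $h_t^*(y^a_w)=t^w y^a_w$ turns every coordinate of the orbit curve into a monomial in $t$, namely $y^a_w(\gamma_h^p(t))=y^a_w(h_t(p))=t^w\,y^a_w(p)$. Since each such monomial has degree $w\le k$, the curve $\gamma_h^p$ is polynomial of degree at most $k$, so passing to the $k$-jet loses nothing. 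First I would record $\Phi^k_h$ in the jet coordinates $(z^A,\dot z^A,\dots,z^{A,(k)})$ induced on $\sT^k F$ by the $z^A=y^a_w$: the only jet coordinate produced by $y^a_w$ that does not vanish is the one of order $w$, with value $w!\,y^a_w(p)$, while the weight-zero coordinates give the base point $\tau(p)$. In particular $\Phi^k_h(p)$ is based at a point of $M=F_0$, so $\Phi^k_h$ does map into $\sT^k F_{|M}$.

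From this coordinate formula the analytic content of the statement is immediate. The map reads $(y^a_w(p))_{a,w}\mapsto (w!\,y^a_w(p))_{a,w}$ followed by the inclusion into $\sT^k F_{|M}$, so it is injective with injective differential, hence an immersion, and it is a homeomorphism onto its image because that image is precisely the closed subset on which all the remaining jet coordinates vanish. Next I would check equivariance, which is the only genuinely structural step but is short: using the monoid law $h_t\circ h_s=h_{ts}$ one finds that both $\Phi^k_h(h_s(p))$ and the result of applying the canonical reparametrisation homogeneity structure of $\sT^k F$ to $\Phi^k_h(p)$ equal the $k$-jet at $0$ of the single curve $t\mapsto h_{ts}(p)$. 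Thus $\Phi^k_h$ intertwines the two monoid actions.

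To conclude, an equivariant injective immersion that is a homeomorphism onto its image is an embedding, and its image, being invariant under the homogeneity structure of $\sT^k F_{|M}$ by equivariance, is a graded submanifold whose induced grading matches the weights $w$. The final assertion then follows by pulling back along $\Phi^k_h$ the globally defined homogeneous jet coordinates of $\sT^k F_{|M}$, which are homogeneous of degree equal to their jet order; their pullbacks are homogeneous functions on $F$ that separate points and form an atlas.

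The hard part will not be any of the verifications above, which are routine once the coordinates are homogeneous, but rather the input they rest on: that the degree $k$ is finite and that the orbit curves are genuinely polynomial, equivalently that the action admits only non-negative integer weights. In a self-contained treatment this is the crux, and is exactly the place where one must do real analysis --- showing, via the semigroup identity $f(h_t(h_s(p)))=f(h_{ts}(p))$ together with a Taylor/Hadamard argument in $t$, that $t\mapsto f(h_t(p))$ is polynomial along each orbit for the coordinate functions $f$ and that no weights beyond a uniform bound $k$ appear. Here I am permitted to take that classification as given, so the embedding $\Phi^k_h$ follows from the coordinate picture above.
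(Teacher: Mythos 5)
Your proposal is circular. The theorem's hypothesis is only a smooth action $h\colon\mathbb{R}\times F\to F$ of the multiplicative monoid; the existence of an atlas of homogeneous coordinates on $F$ is precisely (part of) the conclusion --- note the final sentence, ``In particular, there is an atlas on $F$ consisting of homogeneous function.'' By starting from homogeneous coordinates $(y^a_w)$ satisfying $h_t^*(y^a_w)=t^w y^a_w$, you assume the very structure the theorem is designed to produce, after which everything else is indeed a routine verification. Moreover, the input you claim to be ``permitted'' to use is strictly weaker than what you actually use: what the paper recalls from \cite{Grabowski2012} is that any function which happens to be homogeneous has non-negative integer weight. That is a statement about homogeneous functions, should any exist; it does not produce a single nonconstant homogeneous function, let alone enough of them to form charts, and it is not ``equivalent'' to polynomiality of the orbit curves, contrary to your closing paragraph. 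So the crux you set aside is not the recalled classification of weights --- it is the existence of the homogeneous atlas itself, i.e.\ the theorem.

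The paper's actual argument (sketched in the Remark following the theorem, adapting Lemma~4.1, Proposition~4.2 and Theorem~4.1 of \cite{Grabowski2012}) works with the bare action and produces the grading rather than assuming it. One first shows $M=h_0(F)$ is a submanifold (by the method of \cite[Theorem~1.13]{KMS}), then sets $H(t)=\sT h_t$ restricted to $\sT F_{|M}$; the monoid law gives $H(t)H(s)=H(ts)$, so $Q_0=H(0)$ is a projector, and inductively $Q_r=\frac{\xd^r H}{\xd t^r}(0)$ is a projector on the kernel of $Q_{r-1}$, whose ranges $E_r$ decompose $\sT F_{|M}$ into a finite direct sum --- the finiteness of $k$ comes from this stabilization ($E_{k+1}=\sT M$), not from an assumed top weight. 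This decomposition shows that $\Phi^k_h$, with coordinate expression $\bigl(h(0,x),\partial_t h(0,x),\dots,\partial^k_t h(0,x)\bigr)$, has maximal rank along $M$; equivariance, together with the fact that $h_t$ is a diffeomorphism for $t\neq 0$, then upgrades this to a global embedding onto a graded submanifold of $\sT^k F_{|M}$. Only at the end does one obtain the homogeneous atlas on $F$, by pulling back the canonical graded (jet) coordinates of $\sT^k F_{|M}$ along this embedding: in the paper's proof it is an output, whereas in your proposal it is the input.
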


Any $k$ described by the above theorem we call a \emph{degree} of the homogeneity structure~$h$. We stress that  a graded bundle is not just a manifold with consistently def\/ined homogeneous local coordinates. The additional condition is that the weight vector f\/ield encoding the graded structure is  h-complete; that is the associated one-parameter group of dif\/feomorphisms  can be extended to an action of the monoid~$(\mathbb{R}, \cdot)$.

\begin{Remark}
The above theorem has a counterpart for graded supermanifolds, the proof of which we will quickly sketch here. Following Voronov~\cite{Voronov:2001qf}, we do not require that weights induce parity. The compatibility condition just means that $\mathbb{N}$-weights commute with $\mathbb{Z}_2$-grading. In other words, a homogeneity structure on a supermanifold ${\mathcal M}$ is a smooth action $h\colon {\mathbb R}\times{\mathcal M}\to{\mathcal M}$ of the monoid $({\mathbb R},\cdot)$ such that $h_t\colon {\mathcal M}\to{\mathcal M}$ are morphisms, i.e., respect the parity. Equivalently, we have an $({\mathbb R},\cdot)$-action on
the super-algebra ${\mathcal O}_{\mathcal M}=C^\infty({\mathcal M})$ by homomorphisms.

The method of the proof of \cite[Theorem~1.13]{KMS} can be easily adapted to the supermanifold case to show that ${\mathcal M}_0=h_0({\mathcal M})$ is a~submanifold of~${\mathcal M}$. We can also adapt the methods of the proofs of Lemma~4.1 and Theorem~4.1 in~\cite{Grabowski2012}.
First, def\/ine $H(t)\colon \sT{\mathcal M}_{|{\mathcal M}_0}\to\sT{\mathcal M}_{|{\mathcal M}_0}$ as the derivative $\sT {h_t}$ restricted to $\sT{\mathcal M}_{|{\mathcal M}_0}$. We have $H(t)H(s)=H(ts)$, so $Q_0=H(0)$ is a projector. Since both, $Q_0$ and $\Id-Q_0$ are projectors whose rank cannot fall locally, the rank of $Q_0$ is constant along ${\mathcal M}_0$ (assuming ${\mathcal M}$ is connected) and $E_0=Q_0(\sT{\mathcal M}_{|{\mathcal M}_0})$ is a vector subbundle of~$\sT{\mathcal M}_{|{\mathcal M}_0}$. One can show inductively that
$Q_r=\frac{\xd^r H}{\xd t^r}(0)$ is a~projector operator def\/ined on the kernel of $Q_{r-1}$ and we put
$E_r$ to be its range. According to the argument similar to that of~\cite[Proposition~4.2]{Grabowski2012}, $E_{k+1}=\sT{\mathcal M}_0$ for some~$k$. We have therefore
a vector bundle decomposition
\begin{gather*}
\sT{\mathcal M}_{|{\mathcal M}_0}=E_0\oplus_{{\mathcal M}_0}\dots\oplus_{{\mathcal M}_0}E_{k+1}
\end{gather*}
such that, roughly speaking,
\begin{gather*}
\frac{\partial}{\partial x}\frac{\partial^i h}{\partial t^i}(0,x) ,
\end{gather*}
viewed as a liner map on $\sT{\mathcal M}_{|{\mathcal M}_0}$, vanish on $E_r$ for $r>i$ and are identical on $E_i$.
This implies that the analog of the map~(\ref{Phi}), $\Phi^k_h\colon {\mathcal M}\to\sT^k{\mathcal M}_{|{\mathcal M}_0}$.
whose coordinate form is
\begin{gather*}
\Phi^k_h(x)=\left(h(0,x),\frac{\partial h}{\partial t}(0,x),\dots,\frac{\partial^k h}{\partial t^k}(0,x)\right) ,
\end{gather*}
is of maximal rank at ${\mathcal M}_0$. Since it intertwines the actions of $({\mathbb R},\cdot)$ (cf.~\cite[Lemma~4.1]{Grabowski2012}), it is actually an embedding into a graded submanifold of $\sT^k{\mathcal M}_{|{\mathcal M}_0}$.
Indeed, the maximal rank argument is suf\/f\/icient for odd coordinates, while for even ones we can refer to \cite{Grabowski2012}.
\end{Remark}

Morphisms between graded bundles are necessarily polynomial in the non-zero weight coordinates and respect the weight. Such morphisms can be characterised by the fact that they relate the respective weight vector f\/ields,  or equivalently the respective homogeneity structures  \cite{Grabowski2012}.

\begin{Remark}
It is possible to consider manifolds with gradations that do not lead to h-complete weight vector f\/ields. From the point of view of this paper such manifolds are less rigid in their structure and will exhibit pathological behavior.  We remark that the def\/inition given by Voronov~\cite{Voronov:2001qf} of a~non-negatively graded supermanifold states that the Grassmann even coordinates are `cylindrical'.  Similarly, Roytenberg~\cite{Roytenberg:2001} also insists on this `cylindrical' condition in his def\/inition of an $N$-manifold.   This together with the fact that functions of Grassmann odd coordinates are necessarily polynomial, means that the weight vector f\/ields on  non-negatively graded supermanifolds and the closely related $N$-manifolds are h-complete. One should also note that \v{S}evera~\cite{Severa:2005} takes as his \emph{definition} of an $N$-manifold a supermanifold equipped with an action of $(\mathbb{R}, \cdot)$ such that $-1  \in \mathbb{R}$ acts as the parity operator (it f\/lips sign of any Grassmann odd coordinate). Note however, that \v{S}evera does not of\/fer a proof that homogeneous local coordinates can always be found and so it is not immediately clear if his notion of an $N$-manifold exactly coincides with that of  Roytenberg. This equivalence follows from minor modif\/ication of the arguments made in the previous remark.
\end{Remark}

A graded bundle  of degree $k$  admits a sequence of  polynomial f\/ibrations, where a point of~$F_{l}$ is a class of the points of $F$ described  in an af\/f\/ine coordinate system by the coordinates of weight $\leq l$, with the obvious tower of  surjections
\begin{gather*}
F=F_{k} \stackrel{\tau^{k}}{\longrightarrow} F_{k-1} \stackrel{\tau^{k-1}}{\longrightarrow}   \cdots \stackrel{\tau^{3}}{\longrightarrow} F_{2} \stackrel{\tau^{2}}{\longrightarrow}F_{1} \stackrel{\tau^{1}}{\longrightarrow} F_{0} = M,
\end{gather*}
 where the coordinates on $M$ have zero weight. Note that  $F_{1} \rightarrow M$ is a linear f\/ibration and the other f\/ibrations $F_{l} \rightarrow F_{l-1}$ are af\/f\/ine f\/ibrations in the sense that the changes of local coordinates for the f\/ibres are linear plus and additional additive terms of appropriate weight.  The model f\/ibres here are $\mathbb{R}^{N}$ (cf.~\cite{Grabowski2012}).  We will also use on occasion $\tau := \tau^{k}_{0}\colon F_{k} \rightarrow M$.

Canonical examples of graded bundles are, for instance, vector bundles, $n$-tuple vector bundles, higher tangent bundles~$\sT^kM$, and multivector bundles $\wedge^n\sT E$ of vector bundles $\tau\colon E\to M$ with respect to the projection $\wedge^n\sT\tau\colon \wedge^n\sT E\to \wedge^n\sT M$ (see~\cite{Grabowska2014}).  If the weight is constrained to be either zero or one, then the weight vector f\/ield is precisely a  vector bundle structure on~$F$ and will be generally referred  to as an \emph{Euler vector field}.

The notion of a double vector bundle \cite{Pradines:1974} (or a higher $n$-tuple vector bundle)   is conceptually  clear in the graded language in terms of mutually commuting homogeneity structures, or equivalently weight vector f\/ields; see~\cite{Grabowski:2006,Grabowski2012}. This leads to the higher analogues known as \emph{$n$-tuple graded bundles}, which are  manifolds for which  the structure sheaf carries an $\mathbb{N}^{n}$-grading such that all the weight vector f\/ields are h-complete and pairwise commuting.  In particular a \emph{double graded bundle} consists of a manifold and a pair of mutually commuting and h-complete weight vector f\/ields. If all the weights are either zero or one, then we speak of an \emph{$n$-tuple vector bundle}.

\subsection[$Q$-manifolds and related structures]{$\boldsymbol{Q}$-manifolds and related structures}
Let us brief\/ly def\/ine some structures on supermanifolds that we will employ throughout this paper. The main purpose is to set some nomenclature and notation. The graded versions will be fundamental throughout this paper.

\begin{Definition}
An odd vector f\/ield $Q  \in \Vect(\mathcal{M})$ on a supermanifold $\mathcal{M}$ is said to be a~\emph{homological vector field}  if and only if $2 Q^{2} =  [Q,Q]=0$. Note that, as we have an odd vector f\/ield, this condition is generally non-trivial. A pair $(\mathcal{M},Q)$, where $\mathcal{M}$ is a supermanifold and $Q \in \Vect(\mathcal{M})$ is called a~\emph{$Q$-manifold}. A~\emph{morphism of $Q$-manifolds} is a morphism of supermanifolds that  relates the respective homological vector f\/ields.
\end{Definition}

The nomenclature `$Q$-manifold' is due to Schwarz \cite{Schwarz:1993}.  As is well known, $Q$-manifolds, equipped with an additional grading, give a very economical description of Lie algebroids as f\/irst discovered by  Va$\breve{\textrm{{\i}}}$ntrob \cite{Vaintrob:1997}. Recall the standard notion of a Lie algebroid as a vector bundle $E \rightarrow M$ equipped with a Lie bracket on the sections $[\bullet, \bullet]\colon  \Sec(E) \times \Sec(E) \rightarrow \Sec(E)$ together with an anchor $\rho\colon  \Sec(E) \rightarrow \Vect(M)$ that satisfy the Leibniz rule
\begin{gather*}
 [u,fv] = \rho(u)[f]   v  +   f [u,v],
\end{gather*}

\noindent for all $u,v \in \Sec(E)$ and $f \in C^{\infty}(M)$. The Leibniz rule implies that the anchor is actually a Lie algebra morphism: $\rho\left([u,v]\right) = [\rho(u), \rho(v)]$ (see, e.g.,~\cite{Grabowski:2003}). If we pick some local basis for the sections~$(e_{a})$, then the structure functions of a Lie algebroid are def\/ined by
\begin{gather*}
[e_{a} , e_{b}]  =  C_{ab}^{c}(x)e_{c}, \qquad
\nonumber \rho(e_{a})  =  \rho_{a}^{A}(x)\frac{\partial}{\partial x^{A}},
\end{gather*}
 where we have local coordinates $(x^{A})$ on~$M$. These structure functions satisfy some compatibility conditions which can be neatly encoded in a homological vector f\/ield of weight one on $\Pi E$. Let us employ local coordinates
\begin{gather*}
\big\{\underbrace{x^{A}}_{0}, \, \underbrace{\xi^{a}}_{1}\big\},
\end{gather*}
on $\Pi E$. Here the weight of coordinates corresponds to the natural weight induced by the homogeneity structure associated with the vector bundle structure \cite{Grabowski:2006} and this also corresponds to the Grassmann parity. We just brief\/ly remark that one can also def\/ine Lie algebroids in the category of supermanifolds where it natural to consider the weight and Grassmann parity as being independent. The homological vector f\/ield encoding the Lie algebroid is
\begin{gather*}
Q = \xi^{a}\rho_{a}^{A}(x) \frac{\partial}{\partial x^{A}} + \frac{1}{2}\xi^{a}\xi^{b}C_{ba}^{c}(x)\frac{\partial}{\partial \xi^{c}}.
\end{gather*}
The axioms of a Lie algebroid are then equivalent to $2Q^{2} = [Q,Q]=0$. We will, by minor abuse of nomenclature, also refer to the graded $Q$-manifold $(\Pi E, Q)$ as a  Lie algebroid.

A morphism of Lie algebroid is then understood as a morphism of graded $Q$-manifolds. That is, we have a morphism of super graded bundles that relates the respective homological vector f\/ields. Note that the def\/inition of a Lie algebroid morphism in terms of $Q$-manifold morphisms  is  equivalent to the less obvious notion of a morphism of Lie algebroids as described in~\cite{Mackenzie2005}.

\begin{Example}
Any Lie algebra $(\mathfrak{g}, [\,,\,])$ can be encoded in a homological vector f\/ield on the linear supermanifold $\Pi \mathfrak{g}$. Let us employ local coordinates $(\xi^{a})$ on $\Pi \mathfrak{g}$; then we have
\begin{gather*}
Q = \frac{1}{2}\xi^{a}\xi^{b}C_{ba}^{c} \frac{\partial}{\partial \xi^{c}},
\end{gather*}
\noindent where $C_{ba}^{c}$ is the structure constant of the Lie algebra in question.
In this case the base manifold is just a point and the anchor map is trivial.
\end{Example}

\begin{Example}
In the other extreme, the tangent bundle of a manifold~$\sT M$ is naturally a Lie algebroid  for which the anchor is the identity map. The homological vector on $\Pi \sT M$ that encodes the Lie algebroid structure is nothing other than the de Rham dif\/ferential.
\end{Example}

\begin{Definition}
 An odd Hamiltionain $S \in C^{\infty}(\sT^{*}\mathcal{M})$  that is quadratic in momenta (i.e., f\/ibre coordinates) is said to be a \emph{Schouten structure} if and only if $\{S,S\} = 0$, where the bracket is the canonical Poisson bracket on the cotangent bundle. A pair $(\mathcal{M},S)$, where $\mathcal{M}$ is a supermanifold and $S \in C^{\infty}(\sT^{*}{\mathcal M})$ is a Schouten structure, is called a \emph{$S$-manifold}.  The associated \emph{Schouten bracket} on $C^{\infty}(M)$ is given  as a derived bracket
\begin{gather*}
\SN{f,g}_{S} :=  \{\{f, S\},g \}.
\end{gather*}
\end{Definition}

We must remark here that non-trivial Schouten structures only exist on supermanifolds. A~Schouten bracket is also known as an odd Poisson bracket and satisf\/ies the appropriate graded versions of the Jacobi identity and Leibniz rule.

\begin{Example}
A Lie algebroid structure on a vector bundle $E \rightarrow M$ can be encoded in a weight one  Schouten structure on the supermanifold $\sT^{*}\Pi E^{*}$. Let us employ natural local coordinates  on $\sT^{*}\Pi E^{*}$ (with indicated bi-degrees)
\begin{gather*}
\big(\underbrace{x^{A}}_{(0,0)}, \, \underbrace{\chi_{a}}_{(0,1)}, \, \underbrace{p_{B}}_{(1,1)}, \, \underbrace{\theta^{b}}_{(1,0)}\big) ,
\end{gather*}
Then, the Schouten structure encoding a Lie algebroid is given by
\begin{gather*}
S = \theta^{a}\rho_{a}^{A}(x) p_{A} + \frac{1}{2}\theta^{a}\theta^{b}C_{ba}^{c}(x) \chi_{c}
\end{gather*}
and the axioms of a Lie algebroid are equivalent to $\{ S,S\} =0$, where the bracket here is the canonical Poisson bracket on the cotangent bundle.
\end{Example}

\begin{Definition}
A supermanifold $M$ is said to be a \emph{$QS$-manifold} if it is simultaneously a $Q$-manifold and a $S$-manifold such that $L_{Q}S := \{\mathcal{Q}, S \} =0$, where $\mathcal{Q} \in C^{\infty}(\sT^{*}M)$ is the symbol of the homological vector f\/ield~$Q$.
\end{Definition}

As the symbol map sends the Lie bracket of vector f\/ields to the Poisson bracket, a $QS$-manifold can be considered as a supermanifold equipped with odd Hamiltonians, one linear and one quadratic in momenta, that satisfy $\{\mathcal{Q}, \mathcal{Q} \} =0$,  $\{S,S\}=0$ and $\{ \mathcal{Q}, S\} =0$. The def\/inition of a $QS$-manifold goes back to Voronov~\cite{Voronov:2001qf}.

Graded $QS$-manifolds give us a convenient way to understand Lie bi-algebroids, which is due to Voronov~\cite{Voronov:2001qf}, but also see  Kosmann-Schwarzbach \cite{Kosmann-Schwarzbach:1995}  who modif\/ied the original def\/inition of Mackenzie and  Xu~\cite{Mackenzie:1994}. The original def\/inition involved the dif\/ferential associated with the dual Lie algebroid and Lie bracket on sections of the Lie algebroid, and not the associated  Schouten bracket on `multivector f\/ields'.  Following  Kosmann-Schwarzbach,  a Lie bi-algebroid is a pair of Lie algebroids $(E, E^{*})$ such that
\begin{gather*}
Q_{E}\SN{X,Y}_{E^{*}} = \SN{Q_{E}X, Y}_{E^{*}} + (-1)^{\widetilde{X}}\SN{X, Q_{E} Y}_{E^{*}},
\end{gather*}
for all `multivector f\/ields' $X$ and $Y \in C^{\infty}(\Pi E)$. That is the homological vector f\/ield encoding the Lie algebroid structure on $E$ must be a derivation with respect to the Schouten bracket that encodes the Lie algebroid structure on the dual vector bundle $E^{*}$. It is not hard to see that this def\/inition is equivalent to the compatibility condition
\begin{gather*}
\mathcal{L}_{Q_{E}}S_{E^{*}} = \{ \mathcal{Q}_{E}, S_{E^{*}}\} =0,
\end{gather*}
and so we have a $QS$-manifold.
 Here we use the canonical isomorphism $\sT^{*}\Pi E \simeq \sT^{*}\Pi E^{*}$.
In natural local coordinates the symbol of the homological vector f\/ield and the Schouten structure are given by
\begin{gather*}
\mathcal{Q}_{E}  =  \theta^{a}\rho_{a}^{A}(x)p_{A} + \frac{1}{2} \theta^{a}\theta^{b}C_{ba}^{c}(x) \chi_{c},\qquad
S_{E^{*}}  = \chi_{a}\rho^{aA}(x)p_{A} + \frac{1}{2} \chi_{a}\chi_{b} C^{ba}_{c}(x) \theta^{c},
\end{gather*}
 which are clearly of bi-weight $(2,1)$ and $(1,2)$ respectively. We can then \emph{define} a Lie bi-algebroid as the graded $QS$-manifold $(\sT^{*}\Pi E, Q_{E}, S_{E^{*}})$.

The above def\/inition of a Lie bi-algebroid is not manifestly symmetric in $E$ and $E^{*}$, however, due to the isomorphism
\begin{gather*}
 (\sT^{*}\Pi E,Q_E,S_{E^*} ) \simeq  (\sT^{*}\Pi E^{*},S_E,Q_{E^*} ) ,
\end{gather*}
it is clear that if $(E, E^{*})$ is a Lie bi-algebroid, then so is $(E^{*}, E)$.

\subsection{Weighted Lie algebroids}
 One can think of  a \emph{weighted Lie algebroid} as a Lie algebroid in the category of graded bundles or, equivalently, as a graded bundle in the category of Lie algebroids~\cite{Bruce:2014}. Thus one should think of weighted Lie algebroids as Lie algebroids that carry an additional compatible grading. For the purposes of this paper, we will def\/ine weighted Lie algebroids here using the notion of homogeneity structures as this will turn out to be a useful point of view when dealing with the associated Lie theory.  Let us recall the def\/inition of a graded-linear bundle, which is fundamental in the notion of a weighted Lie algebroid.

\begin{Definition}
A manifold $D_{(k-1, 1)}$ equipped with a pair of homogeneity structures $(\widehat{\rmh}, \widehat{\rml})$ of degree $k-1$ and $1$ respectively is called a \emph{graded-linear bundle} of degree $k$, which we will abbreviate as $\mathcal{GL}$-bundle, if and only if the respective actions commute.
\end{Definition}

The above def\/inition is equivalent to the def\/inition given in \cite{Bruce:2014} in terms of mutually commuting h-complete weight vector f\/ields of degree $k-1$ and $1$. In all, by passing to total weight, a~$\mathcal{GL}$-bundle is a graded bundle of degree $k$. We will denote the base def\/ined by the vector bundle structure as $B_{k-1} \rightarrow M$. We will  generally employ the shorthand notation~$D_{k}$ for a~$\mathcal{GL}$-bundle~$D_{(k-1,1)}$ of degree~$k$ in this paper.

\begin{Example}[\cite{Bruce:2014}]
Let $F_{k-1}$ be a graded bundle of degree $k-1$, then $\sT F_{k-1}$ is canonically a~$\mathcal{GL}$-bundle of degree~$k$. The degree $k-1$ homogeneity structure is simply the tangent lift of the homogeneity structure on the initial graded bundle, while the degree one homogeneity structure is that associated with the natural vector bundle structure of the tangent bundle. That is, if we equip $F_{k-1}$ with local coordinates $(x^{\alpha}_{u})$, here $0 \leq u < k$, then  $\sT F_{k-1}$ can be naturally equipped with coordinates
\begin{gather*}
\big(\underbrace{x^{\alpha}_{u}}_{(u,0)}, \, \underbrace{\delta x_{u+1}^{\beta}}_{(u,1)}\big).
\end{gather*}
\end{Example}

\begin{Example}[\cite{Bruce:2014}]
Similarly to the above example, if $F_{k-1}$ be a graded bundle of degree $k-1$, then $\sT^{*} F_{k-1}$ is canonically a $\mathcal{GL}$-bundle of degree~$k$. Note there we have to employ the \emph{phase lift} of the homogeneity structure on $F_{k-1}$ to ensure that we do not leave the category of graded bundles~\cite{Grabowski2013}; that is we do not want negative weight coordinates. This amounts to an appropriate shift in the weight and allows us to employ homogeneous local coordinates of the form
\begin{gather*}
\big(\underbrace{x^{\alpha}_{u}}_{(u,0)}, \, \underbrace{p_{\beta}^{k-u}}_{(k-1-u, 1)}\big).
\end{gather*}
\end{Example}
Note that as we have a linear structure on $\mathcal{GL}$-bundles, that is we have a homogeneity structure of weight one, or equivalently an Euler vector f\/ield, applying the parity reversion functor makes sense. Thus, we can consider the $\mathcal{GL}$-antibundle $(\Pi D_{k}, \Pi\,\widehat{\rmh}, \Pi\, \widehat{\rml})$, where we def\/ine viz
\begin{gather*}
\Pi \widehat {\rmh}_{t}\colon \ \Pi D_{k} \rightarrow \Pi D_{k},
\end{gather*}
\noindent and similarly for $\Pi  \widehat{\rml}$. We can now def\/ine a weighted Lie algebroid as follows;

\begin{Definition}
A \emph{weighted Lie algebroid} of degree $k$ is the quadruple
\begin{gather*}
\big(\Pi D_{k},\Pi \widehat{\rmh}, \Pi  \widehat{\rml}, Q\big ),
\end{gather*}
 where $(\Pi D_{k}, Q)$ is a $Q$-manifold and  $(\Pi D_{k},\Pi \widehat{\rmh}, \Pi \widehat{\rml})$ is a $\mathcal{GL}$-antibundle such that
\begin{gather}
\label{xx} Q \circ \big(\Pi \widehat{\rmh}_{t}\big)^{*}  =  \big(\Pi\,\widehat{\rmh}_{t}\big)^{*} \circ Q,\\
s   Q \circ \big(\Pi  \widehat{\rml}\big)_{s})^{*}  =  \big(\Pi \widehat{\rml}_{s}\big)^{*} \circ Q,\nonumber
\end{gather}
for all $t$  and all $s \in \mathbb{R}$.
\end{Definition}

The above def\/inition is really just the statement that a weighted Lie algebroid can be def\/ined as a $\mathcal{GL}$-antibundle equipped with a homological vector of bi-weight $(0,1)$.
By passing to total weight,  weighted Lie algebroids produce examples of higher Lie algebroids, cf.\ Voronov~\cite{voronov-2010}.

 \begin{Example}It is not hard to see that a weighted Lie algebroid of degree~$1$ is just a standard Lie algebroid; the additional homogeneity structure is trivial and so we have a graded super bundle of degree one and a weight one homological vector f\/ield.
 \end{Example}

 \begin{Example}
 Similarly to the previous example, weighted Lie algebroids of degree $2$  (i.e., of bi-degree $(1,1)$) are $\mathcal{VB}$-algebroids; we now have a~double super vector bundle structure and a~weight $(0,1)$ homological vector f\/ield \cite{Gracia-Saz:2009}.
 \end{Example}
Further natural examples of weighted Lie algebroids include the tangent bundle of a graded bundle and the higher order tangent bundles of a Lie algebroid, see \cite{Bruce:2014}.

 The above def\/inition allows for a further more economical def\/inition via the following proposition, which follows immediately from (\ref{xx}).

\begin{Proposition}
A weighted Lie algebroid of degree $k$ can equivalently be defined as a Lie algebroid $(\Pi E, Q)$ equipped with a homogeneity structure of degree $k-1$ such that
\begin{gather*}
\Pi  \widehat{\rmh}_{t}\colon \  \Pi E \rightarrow \Pi E,
\end{gather*}
\noindent is a Lie algebroid morphism for all $t \in \mathbb{R}$.
\end{Proposition}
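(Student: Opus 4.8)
The plan is to unpack the original definition into two independent conditions and match each to a piece of the reformulated statement, using two facts already recalled: the Vaintrob correspondence between Lie algebroids and weight-one homological vector fields, and the characterisation of graded-bundle morphisms as maps intertwining homogeneity structures. First I would read the second line of (\ref{xx}), namely $s\, Q\circ(\Pi\widehat{\rml}_s)^{*}=(\Pi\widehat{\rml}_s)^{*}\circ Q$, as precisely the statement that $Q$ has weight one with respect to the linear homogeneity structure $\Pi\widehat{\rml}$. Since $(\Pi D_k,\Pi\widehat{\rml})$ is a super vector bundle $\Pi E$, with $E:=D_k$ regarded as a vector bundle over $B_{k-1}$ via $\widehat{\rml}$, the pair $(\Pi E,Q)$ with $Q$ homological and of weight one with respect to $\Pi\widehat{\rml}$ is exactly a Lie algebroid in the sense recalled above. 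Thus the $\mathcal{GL}$-antibundle structure together with the second line of (\ref{xx}) is nothing but the datum of a Lie algebroid $(\Pi E,Q)$ carrying in addition the homogeneity structure $\Pi\widehat{\rmh}$ of degree $k-1$.

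It then remains to show that the two surviving ingredients — the commutation of $\widehat{\rmh}$ with $\widehat{\rml}$ built into the $\mathcal{GL}$-antibundle, and the first line of (\ref{xx}) — together say exactly that each $\Pi\widehat{\rmh}_t$ is a Lie algebroid morphism. Recalling that a Lie algebroid morphism is a morphism of the underlying super vector bundles that relates the two homological vector fields, I would verify the two halves separately. For the vector-bundle half: a map relates the degree-one homogeneity structures if and only if it intertwines the associated Euler vector fields, so that $\Pi\widehat{\rmh}_t$ respects the single linear structure $\Pi\widehat{\rml}$ if and only if $\widehat{\rmh}_t\circ\widehat{\rml}_s=\widehat{\rml}_s\circ\widehat{\rmh}_t$ for all $s$, which is precisely the commutation defining the $\mathcal{GL}$-antibundle. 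For the $Q$-part: $\Pi\widehat{\rmh}_t$ relates $Q$ to $Q$ if and only if $Q\circ(\Pi\widehat{\rmh}_t)^{*}=(\Pi\widehat{\rmh}_t)^{*}\circ Q$, which is exactly the first line of (\ref{xx}).

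Putting the two halves together yields, for each fixed $t$, that $\Pi\widehat{\rmh}_t\colon\Pi E\to\Pi E$ (covering the induced degree-$(k-1)$ homogeneity structure on the base $B_{k-1}$, so that covering a non-trivial base map is allowed and expected) is a morphism of $Q$-manifolds, i.e.\ a Lie algebroid morphism. Conversely, if each $\Pi\widehat{\rmh}_t$ is a Lie algebroid morphism, reading the two defining properties of such a morphism backwards reproduces both the commutativity of the homogeneity structures and the first line of (\ref{xx}), and combined with the weight-one condition it recovers the full $\mathcal{GL}$-antibundle with homological $Q$ of bi-weight $(0,1)$. The equivalence of the two definitions follows.

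The one point demanding care — and the only genuine content beyond bookkeeping — is the identification in the vector-bundle half: one must check that ``$\Pi\widehat{\rmh}_t$ respects the linear structure'' (the morphism-of-graded-bundles condition) really coincides with the commutativity $\widehat{\rmh}\leftrightarrow\widehat{\rml}$, and in particular that passing through the parity-reversion functor $\Pi$ does not disturb this. This is safe because $\Pi$ acts only in the linear weight-one directions and commutes with both homogeneity structures, so the commutation of $\Pi\widehat{\rmh}_t$ with $\Pi\widehat{\rml}_s$ is equivalent to that of $\widehat{\rmh}_t$ with $\widehat{\rml}_s$; this is the step where the abstract characterisation of graded-bundle morphisms via homogeneity structures does the essential work, and everything else is a direct rereading of (\ref{xx}).
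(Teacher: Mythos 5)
Your proposal is correct and matches the paper's (very terse) justification: the paper states the proposition ``follows immediately from (\ref{xx})'', and your argument is precisely the natural unpacking of that claim --- the second line of (\ref{xx}) is the weight-one (Vaintrob) condition making $(\Pi E,Q)$ a Lie algebroid over $B_{k-1}$, while the commutation of the two homogeneity structures plus the first line of (\ref{xx}) is exactly the statement that each $\Pi\widehat{\rmh}_{t}$ is a morphism of super vector bundles relating $Q$ to itself, i.e.\ a Lie algebroid morphism. Your additional care about the parity-reversion functor and the non-trivial base map is sound and only makes explicit what the paper leaves implicit.
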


\begin{Remark}
Bursztyn, Cabrera and del Hoyo  \cite[Theorem~3.4.3]{Bursztyn:2014} establish  that $\mathcal{VB}$-algebroids can be def\/ined as Lie algebroids equipped with a compatible regular homogeneity structure. Note that they make explicit use of the Poisson structure associated with a Lie algebroid, where we prefer the (equivalent) description in terms of $Q$-manifolds.  Their motivation  for describing $\mathcal{VB}$-algebroids in terms of compatible homogeneity structures  is, much like our motivation,  to tackle the associated Lie theory.
\end{Remark}

To get a more traditional description of a weighted Lie algebroid, let us employ homogeneous local coordinates
\begin{gather*}
\big(\underbrace{x^{\alpha}_{u}}_{(u,0)}, \, \underbrace{\theta^{I}_{u+1}}_{(u,1)}\big),
\end{gather*}
on $\Pi D_{k}$, where  $0 \leq u \leq k-1$ and this label refers to the total weight. Note that the second component of the bi-weight encodes the Grassmann parity of the coordinates; that is the~$\theta$'s are anticommuting coordinates. In these homogeneous local coordinates the homological vector f\/ield encoding the structure of a weighted Lie algebroid is
\begin{gather*}
Q = \theta^{I}_{u-u'+1}Q_{I}^{\alpha}[u'](x) \frac{\partial}{\partial x^{\alpha}_{u} }  + \frac{1}{2} \theta^{I}_{u''-u'+1} \theta^{J}_{u-u''+1}Q_{JI}^{K}[u'](x)\frac{\partial}{\partial \theta_{u+1}^{K} } ,
\end{gather*}
 where $Q_{I}^{\alpha}[u']$ and $Q_{IJ}^{K}[u']$ are the homogenous parts of the structure functions of degree~$u'$. In the notation employed here, any  $\theta$ with seemingly zero or negative total weight  are set to zero.

In \cite{Bruce:2014} homogeneous  linear sections of $D_{k} \rightarrow B_{k-1}$ of weight $r$ were def\/ined functions on the $\mathcal{GL}$-bundle  $D_{k}^{*}$ of bi-weight $(r-1, 1)$. Let us equip $D_{k}^{*}$ with homogeneous local coordinates
\begin{gather*}
\big(\underbrace{x^{\alpha}_{u}}_{(u,0)}, \, \underbrace{\pi_{I}^{u+1}}_{(u,1)}\big).
\end{gather*}
  In these local coordinates a linear section of $D_{k}$ is given by
\begin{gather*}
s_{r}= s_{r-u-1}^{I}(x)\pi_{I}^{u+1}.
\end{gather*}
 Such linear  sections can also be understood as vertical vector f\/ields constant along the f\/ibres  of~$\Pi D_{k}$. In the graded language we consider vector f\/ields that are of weight~$-1$ with respect to the second component of the bi-weight on~$\Pi D_{k}$.  Note that we have a shift in the bi-weight and also the Grassmann parity.  The reason we use~$\Pi D_{k}$ rather than $D_{k}$ will become clear in a moment when we consider how to construct the Lie algebroid bracket and the anchor. By employing  the homogeneous local coordinates on~$\Pi D_{k}$ introduced perviously we have the identif\/ication
\begin{gather*}
s_{r} \leftrightsquigarrow i_{s_{r}}:= s^{I}_{r-u-1}(x)\frac{\partial}{\partial \theta_{k-u}^{I}} ,
\end{gather*}
 which is really no more than the `interior product' generalised to sections of a vector bundle. We do not require any extra structure here for this identif\/ication, just the vector bundle structure is required. Note the shift in the bi-weight $(r-1,1)\rightarrow (r-k,-1)$, or in other words we have a~shift  of $(-(k-1), -2)$. This shift will be very important.

We can now employ the derived bracket formalism~\cite{Kosmann-Schwarzbach:2004} to construct the Lie algebroid bracket;
\begin{gather*}
i_{[s_{1}, s_{2}]} = [[Q, i_{s_{1}}], i_{s_{2}}],
\end{gather*}
 up to a possible overall minus sign by convention. By inspection we see that $i_{[s_{1}, s_{2}]}$ is of bi-weight $(r_{1}+ r_{2}- 2k, -1)$, where $s_{1}$ is a linear section of degree~$r_{1}$ and similarly for~$s_{2}$. This  means that the Lie algebroid bracket $[s_{1},s_{2}]$ is of bi-weight $(r_{1} + r_{2}-1 -k, 1)$; that is the Lie algebroid bracket carries weight~$-k$.

\begin{Definition}
A \emph{morphism of weighted Lie algebroids} is a morphism of the underlying Lie algebroids (understood as a morphism of $Q$-manifolds) that intertwines the respective additional homogeneity structures.
\end{Definition}

 In particular,  there is a forgetful functor from the category of weighted Lie algebroids to Lie algebroids given by forgetting the homogeneity structure. The image of this forgetful functor is  not a full subcategory of the category of Lie algebroids.  We will denote the category of Lie algebroids as $\catname{Algd}$ and the subcategory of weighted Lie algebroids as~$\catname{wAlgd}$.

 \begin{Proposition}\label{prop:algebroid tower}
 If $(\Pi D_{k}, Q)$ is weighted Lie algebroid of degree $k$, then each $\Pi D_{j}$ for $0\leq j < k$ comes with the structure of a weighted Lie algebroid of degree $j$. In particular, $\Pi E = \Pi D_{1}$ is a `genuine' Lie algebroid. Moreover,  the projection form any `higher level' to a `lower level' is a morphism of weighted Lie algebroids.
  \end{Proposition}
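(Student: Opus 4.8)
The plan is to realise each $\Pi D_j$ as a reduction of the weighted Lie algebroid $\Pi D_k$ along the \emph{non-linear} grading, and then to check that the homological vector field, the two homogeneity structures, and the compatibility conditions all descend. By passing to total weight $D_{(k-1,1)}$ is a graded bundle of degree $k$, but the decisive grading for this construction is the degree $k-1$ homogeneity structure $\widehat{\rmh}$. Reducing $\widehat{\rmh}$ from degree $k-1$ to degree $j-1$ via the tower of polynomial fibrations recalled above produces a manifold $D_j$; since $\widehat{\rmh}$ and $\widehat{\rml}$ commute, the degree one structure $\widehat{\rml}$ descends to this reduction, so that $D_j$ is again a $\mathcal{GL}$-bundle $D_{(j-1,1)}$ of degree $j$, and the reduction map $\mathsf{p}^k_j\colon D_k\to D_j$ is a morphism of $\mathcal{GL}$-bundles, i.e.\ equivariant for both homogeneity structures. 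Passing to antibundles gives a projection $\mathsf{p}^k_j\colon\Pi D_k\to\Pi D_j$ intertwining $\Pi\,\widehat{\rmh}$ and $\Pi\,\widehat{\rml}$ with the corresponding structures on $\Pi D_j$.

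Next I would show that $Q$ descends along $\mathsf{p}^k_j$. In coordinates $\mathsf{p}^k_j$ simply forgets the coordinates of $\widehat{\rmh}$-weight $\geq j$, so that $(\mathsf{p}^k_j)^{*}C^{\infty}(\Pi D_j)$ is exactly the subalgebra of $C^{\infty}(\Pi D_k)$ of functions involving no coordinate of $\widehat{\rmh}$-weight $>j-1$. The key observation is condition~(\ref{xx}): it says precisely that $Q$ has $\widehat{\rmh}$-weight $0$ and hence preserves the $\widehat{\rmh}$-grading. Applying $Q$ to a coordinate of $\widehat{\rmh}$-weight $w\leq j-1$ therefore yields a function of $\widehat{\rmh}$-weight $w$, and since all $\widehat{\rmh}$-weights are non-negative such a homogeneous function cannot contain any coordinate of $\widehat{\rmh}$-weight $>j-1\geq w$. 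Thus $Q$ preserves $(\mathsf{p}^k_j)^{*}C^{\infty}(\Pi D_j)$ and, $\mathsf{p}^k_j$ being a surjective submersion, restricts to a well-defined vector field $Q_j$ on $\Pi D_j$ that is $\mathsf{p}^k_j$-related to $Q$, i.e.\ $(\mathsf{p}^k_j)^{*}\circ Q_j = Q\circ(\mathsf{p}^k_j)^{*}$.

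The remaining checks are then formal consequences of this relatedness. Since $Q^{2}=0$ and injectivity of $(\mathsf{p}^k_j)^{*}$ transports it, one gets $Q_j^{2}=0$, and $Q_j$ is odd as $Q$ is. Using that $\mathsf{p}^k_j$ intertwines the homogeneity structures, the same injectivity argument carries both compatibility identities from $Q$ to $Q_j$, so $Q_j$ again has bi-weight $(0,1)$ and $(\Pi D_j,\Pi\,\widehat{\rmh},\Pi\,\widehat{\rml},Q_j)$ is a weighted Lie algebroid of degree $j$. For $j=1$ the $\widehat{\rmh}$-grading is trivial, $D_1$ is an honest vector bundle $E\to M$, and by the degree one Example $(\Pi E,Q_1)$ is a genuine Lie algebroid. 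Finally, each projection $\mathsf{p}^{j}_{i}\colon\Pi D_j\to\Pi D_i$ with $i\leq j$ intertwines both homogeneity structures by construction and, factoring $\mathsf{p}^{k}_{i}=\mathsf{p}^{j}_{i}\circ\mathsf{p}^{k}_{j}$ and invoking injectivity of $(\mathsf{p}^k_j)^{*}$ once more, relates $Q_j$ to $Q_i$; hence it is a morphism of weighted Lie algebroids.

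I expect the only genuine subtlety to lie in the first step: verifying that the degree one homogeneity structure really descends to each reduction, so that the $D_j$ are bona fide $\mathcal{GL}$-bundles and the projections are $\mathcal{GL}$-morphisms. This is a statement about reductions of (double) graded bundles that can be settled by the techniques of Grabowski--Rotkiewicz. By contrast the heart of the matter, the projectability of $Q$, is comparatively soft: it amounts to nothing more than the weight-$0$ condition~(\ref{xx}) combined with the non-negativity of the weights.
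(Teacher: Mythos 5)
Your proof is correct and takes essentially the same route as the paper's: the paper likewise deduces projectability of $Q$ along the tower of affine fibrations from the condition that $Q$ has bi-weight $(0,1)$ together with the absence of negatively graded coordinates, and then observes that the projected field is again homological of weight $(0,1)$, so each level is a weighted Lie algebroid and the projections are morphisms. Your write-up merely fills in the weight-counting and $\mathsf{p}^k_j$-relatedness details that the paper dismisses as a direct check in the local expression for $Q$.
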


 \begin{proof}
 Because of the condition that the weight of $Q$ is $(0,1)$ and that there are no negatively graded coordinates, we know that $Q$ is projectable to any of the `levels' of the af\/f\/ine f\/ibrations
 \begin{gather}\label{eqn:levels}
 \Pi D_{(k-1,1)} \rightarrow \Pi D_{(k-2, 1)} \rightarrow \cdots \rightarrow \Pi D_{(1,1)} \rightarrow \Pi D_{(0,1)}.
  \end{gather}
  \noindent This can be directly checked using the local expression for the homological vector f\/ield. Moreover, the projection of~$Q$ to any level is a~homological vector f\/ield of weight~$(0,1)$ and hence we have  the structure of a weighted Lie algebroid. The fact that the projections give rise to weighted Lie algebroid morphisms follows directly.
 \end{proof}

\subsection{Lie groupoids}

Here we shall set some notation and recall some well-known results. Let $\mathcal{G} \rightrightarrows M$ be an arbitrary Lie groupoid with \emph{source map} $\underline{s}\colon \mathcal{G} \rightarrow M$ and \emph{target map} $\underline{t}\colon \mathcal{G} \rightarrow M$. There is also the inclusion map $\iota \colon M \rightarrow \mathcal{G}$   and a \emph{partial multiplication}  $(g,h) \mapsto g\cdot h$ which is def\/ined on $\mathcal{G}^{(2)}=\{(g,h)\in\mathcal{G}\times\mathcal{G}\colon  \underline{s}(g) = \underline{t}(h)\}$. Moreover, the manifold $\mathcal{G}$ is foliated by $\underline{s}$-f\/ibres $\mathcal{G}_{x}= \{  g \in \mathcal{G}\,|\, \underline{s}(g) =x\}$, where $x \in M$. As by def\/inition the source and target maps are submersions, the $\underline{s}$-f\/ibres are themselves smooth manifolds. Geometric objects associated with this foliation will be given the superscript $\underline{s}$. In particular, the distribution tangent to the leaves of the foliation will be denoted by $\sT^{\underline{s}} \mathcal{G}$. To ensure no misunderstanding with the notion of a Lie groupoid morphism we recall the def\/inition we will be using.

\begin{Definition}\label{def:Lie algebroid Morphism}
Let $\mathcal{G}_{i} \rightrightarrows  M_{i}$ $(i=1,2)$ be a pair of Lie groupoids. Then a \emph{Lie groupoid morphisms} is a pair of maps $(\Phi, \phi)$ such that the following diagram is commutative
\begin{gather*}
\begin{xy}
(0,20)*+{\mathcal{G}_{1}}="a"; (20,20)*+{\mathcal{G}_{2}}="b";%
(0,0)*+{M_{1}}="c"; (20,0)*+{M_{2}}="d";%
{\ar "a";"b"}?*!/_2mm/{\Phi};
{\ar@<1.ex>"a";"c"} ;
{\ar@<-1.ex> "a";"c"} ?*!/^3mm/{s_{1}} ?*!/_6mm/{t_{1}};
{\ar@<1.ex>"b";"d"};%
{\ar@<-1.ex> "b";"d"}?*!/^3mm/{s_{2}} ?*!/_6mm/{t_{2}};  %
{\ar "c";"d"}?*!/^3mm/{\phi};
\end{xy}
\end{gather*}
in the sense that $\underline{s}_{2}\circ \Phi = \phi \circ \underline{s}_{1}$ and $\underline{t}_{2}\circ \Phi = \phi \circ \underline{t}_{1} $,
 subject to the further condition that $\Phi$ respect the (partial) multiplication; if $g,h \in \mathcal{G}_{1}$ are composable, then  $ \Phi(g \cdot h) = \Phi(g) \cdot \Phi(h)$. It then follows that for $x \in M_{1}$  we have $\Phi(\Id_{x}) = \Id_{\phi(x)}$ and  $\Phi(g^{-1}) = \Phi(g)^{-1}$.
\end{Definition}

We will denote the category of Lie groupoids as $\catname{Grpd}$. It is well known that via a dif\/fe\-ren\-tia\-tion procedure one can construct the \emph{Lie functor}
\begin{gather*}
\catname{Grpd} \stackrel{\Lie}{\xrightarrow{\hspace*{30pt}}} \catname{Algd},
\end{gather*}
that sends a Lie groupoid to its Lie algebroid, and sends morphisms of Lie groupoids to morphisms of the corresponding Lie algebroids. However, as is also well known, we do not have an equivalence of categories as not all Lie  algebroids arise as the inf\/initesimal versions of Lie groupoids.  There is no direct generalisation of Lie~III,  apart from the local case. The obstruction to the integrability of Lie algebroids, the so called \emph{monodromy groups},  were f\/irst uncovered by Crainic and Fernandes \cite{Crainic:2003}.  For the transitive case the topological obstruction was uncovered by Mackenzie \cite{Mackenzie:1987}.  The interested reader can consult the original literature or the very accessible lecture notes also by Crainic and  Fernandes~\cite{Crainic:2011}. To set some notation and nomenclature, given a  Lie groupoid~$\mathcal{G}$, we say that $\Lie(\mathcal{G}) = \rmA(\mathcal{G})$ \emph{integrates}  $\rmA(\mathcal{G})$. Moreover, if $\Phi\colon \mathcal{G} \rightarrow \mathcal{H}$ is a~morphism of Lie groupoids, then we will write $\Phi' = \Lie(\Phi) \colon  \rmA(\mathcal{G}) \rightarrow \rmA(\mathcal{H})$ for the corresponding Lie algebroid morphism,  which actually comes from the dif\/ferential $\sT\Phi\colon \sT\mathcal{G} \rightarrow \sT\mathcal{H}$ restricted to $\underline{s}$-f\/ibres at the submanifold~$M$.

Let  us just recall Lie II theorem, as we will need it later on.

\begin{Theorem}[Lie II] \label{thm:integration morphisms}
Let $\mathcal{G} \rightrightarrows M$ and $\mathcal{H}\rightrightarrows N$ be Lie groupoids. Suppose that $\mathcal{G}$ is source simply-connected and that $\phi\colon \rmA(\mathcal{G}) \rightarrow \rmA(\mathcal{H})$ is a Lie algebroid morphism between the associated Lie algebroids. Then, $\phi$ integrates to a unique Lie groupoid morphisms $\Phi\colon \mathcal{G} \rightarrow \mathcal{H}$ such that $\Phi' = \phi$.
\end{Theorem}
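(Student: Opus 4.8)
The plan is to build $\Phi$ directly, fibre by fibre over the source projection, by transporting paths through $\phi$ and using the source simple-connectedness of $\mathcal{G}$ to make the construction unambiguous. Let $f\colon M \to N$ be the smooth base map covered by the Lie algebroid morphism $\phi$. Fix $g \in \mathcal{G}$ and put $x = \underline{s}(g)$. Because the source fibre $\mathcal{G}_{x}$ is connected, choose a smooth path $g(\tau)$, $\tau \in [0,1]$, inside $\mathcal{G}_{x}$ with $g(0)=\Id_{x}$ and $g(1)=g$. Translating the velocity $\dot g(\tau) \in \sT^{\underline{s}}\mathcal{G}$ back to the units yields its logarithmic derivative, an $\rmA(\mathcal{G})$-path $a(\tau)$ lying over the base curve $\underline{t}(g(\tau))$ in $M$. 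Applying $\phi$ pointwise gives an $\rmA(\mathcal{H})$-path $b(\tau):=\phi(a(\tau))$ over the base curve $f(\underline{t}(g(\tau)))$ in $N$; since $\phi$ intertwines the anchors, $b$ is again a genuine algebroid path. This $b$-path reconstructs uniquely, by solving the time-dependent right-translation-invariant ordinary differential equation on $\mathcal{H}$ with initial condition $\Id_{f(x)}$, to a path $h(\tau)$ in the source fibre $\mathcal{H}_{f(x)}$. I define $\Phi(g):=h(1)$; by construction $\underline{s}(\Phi(g))=f(\underline{s}(g))$ and $\underline{t}(\Phi(g))=f(\underline{t}(g))$.

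The heart of the argument, and the step where both hypotheses are genuinely used, is to prove that $\Phi(g)$ does not depend on the chosen path $g(\tau)$. Two such paths from $\Id_{x}$ to $g$ differ by a loop in $\mathcal{G}_{x}$, and since $\mathcal{G}_{x}$ is simply connected they are homotopic rel endpoints; any such homotopy of honest source-fibre paths induces an $\rmA(\mathcal{G})$-homotopy of the associated $\rmA(\mathcal{G})$-paths. The key lemma is then that $\phi$ carries $\rmA(\mathcal{G})$-homotopies to $\rmA(\mathcal{H})$-homotopies, and that $\rmA(\mathcal{H})$-homotopic paths reconstruct to the same endpoint in the integration $\mathcal{H}$. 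This is precisely the integrability (Frobenius/flatness) content of $\phi$ being a Lie algebroid morphism: in the $Q$-manifold language, the condition that $\phi$ relate the homological vector fields on $\Pi\rmA(\mathcal{G})$ and $\Pi\rmA(\mathcal{H})$ is exactly what forces the monodromy of the lifted $b$-paths around a contractible loop to be trivial. I expect this homotopy-invariance lemma to be the main obstacle; everything else is comparatively mechanical. Note that only $\mathcal{G}$ need be source simply connected: simple-connectedness is used solely to guarantee that all paths to $g$ are homotopic, whereas reconstruction of $\rmA(\mathcal{H})$-paths makes sense in any integration $\mathcal{H}$. Smoothness of $\Phi$ follows from the smooth dependence of solutions of ordinary differential equations on initial conditions and parameters, once one chooses, locally in $g$, a family of paths $g(\tau)$ depending smoothly on $g$.

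It remains to check that $\Phi$ is a groupoid morphism, that it integrates $\phi$, and that it is unique. Multiplicativity is verified by representing a product $g\cdot g'$ through the concatenation of a path to $g'$ with a suitable translate of a path to $g$; the corresponding $\rmA(\mathcal{H})$-paths concatenate compatibly because $\phi$ commutes with the infinitesimal translations, so the reconstructed endpoint is $\Phi(g)\cdot\Phi(g')$, and $\Phi(\Id_{x})=\Id_{f(x)}$ is immediate from the constant path. Differentiating $\Phi$ transversally to the units returns $\phi$ by the very definition of the construction, so $\Phi'=\phi$. For uniqueness, any two integrating morphisms agree near the units, where they are pinned down by $\phi$ through the local form of Lie's first theorem; the set on which they coincide is open, closed and saturated within each source fibre, so connectedness of the source fibres of $\mathcal{G}$ forces agreement everywhere. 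An alternative and essentially equivalent route, worth recording, is to integrate the graph of $\phi$, a subalgebroid of $\rmA(\mathcal{G})\times\rmA(\mathcal{H})=\rmA(\mathcal{G}\times\mathcal{H})$, to an immersed subgroupoid of $\mathcal{G}\times\mathcal{H}$ and to show, again using source simple-connectedness of $\mathcal{G}$, that its first projection is an isomorphism onto $\mathcal{G}$, whence the second projection furnishes $\Phi$.
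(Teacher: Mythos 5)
The paper contains no proof of this statement: Theorem~\ref{thm:integration morphisms} is recalled in the preliminaries as a known result, attributed to Mackenzie and Xu~\cite{Mackenzie:2000}, with a simplified proof credited to Moerdijk and Mr\v{c}un~\cite{Moerdijk:2002}; the reader is referred to the literature for proofs of everything in that section. So your proposal can only be compared with those references, not with an in-paper argument. What you give is the $\rmA$-path proof in the style of Crainic and Fernandes~\cite{Crainic:2003,Crainic:2011}: represent $g$ by a source-fibre path, pass to its logarithmic derivative to get an $\rmA(\mathcal{G})$-path, push it through $\phi$, reconstruct in $\mathcal{H}$ by integrating the associated time-dependent right-invariant vector field, and use source simple-connectedness together with homotopy invariance to see the endpoint is well defined. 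This is a correct and by now standard route; it is genuinely different from Moerdijk--Mr\v{c}un, who instead integrate the infinitesimal action of $\rmA(\mathcal{G})$ on $\mathcal{H}$ determined by $\phi$, while your closing alternative (integrating the graph of $\phi$ inside $\mathcal{G}\times\mathcal{H}$ and showing the first projection is a diffeomorphism) is essentially the classical Lie-group argument transported to groupoids. The trade-off is the usual one: the $\rmA$-path proof is conceptually transparent and meshes well with the Weinstein-groupoid picture, but it quietly relies on nontrivial lemmas about $\rmA$-homotopies, whereas the action/graph proofs lean on Frobenius-type integration of involutive distributions.

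That reliance is the one place where your write-up falls short of a complete proof: the step you yourself identify as the heart of the matter --- that $\phi$ carries $\rmA(\mathcal{G})$-homotopies to $\rmA(\mathcal{H})$-homotopies, and that $\rmA(\mathcal{H})$-homotopic paths reconstruct to the same endpoint in any integration $\mathcal{H}$ --- is asserted, with a gesture at the $Q$-manifold formulation, rather than proved, and these two lemmas carry essentially all the analytic content of the theorem. Both are true and standard: the first follows cleanly once one characterises an $\rmA$-homotopy as a Lie algebroid morphism $\sT([0,1]^2)\to \rmA$ with boundary conditions (equivalently, a morphism of $Q$-manifolds $(\Pi\sT([0,1]^2),\rmd)\to(\Pi \rmA,Q)$), since composing with $\phi$ is again a morphism; the second is a direct flow computation showing that a fixed-endpoint homotopy produces a vanishing variation of the reconstructed endpoint, valid for any Lie groupoid $\mathcal{H}$, not only the source simply-connected one. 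With those two lemmas supplied, your well-definedness, smoothness, multiplicativity (concatenation), and uniqueness (open-closed on connected source fibres) arguments are all sound, and your observation that simple-connectedness is needed only on the $\mathcal{G}$ side matches the paper's remark that this hypothesis is essential.
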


This generalisation  of Lie~II to the groupoid case was f\/irst proved by Mackenzie ans Xu~\cite{Mackenzie:2000}. A simplif\/ied proof was obtained shortly after by Moerdijk and  Mr\v{c}un~\cite{Moerdijk:2002}.  Note that the assumption that the Lie groupoid~$\mathcal{G}$ must be source simply-connected is essential.

\section{Weighted Lie groupoids}\label{sec:weighted Lie groupoids}
To avoid any possible confusion with the overused adjective `graded', following the nomenclature in~\cite{Bruce:2014} we will refer to weighted rather than graded Lie groupoids.  The term  `graded groupoid' appears in the work of Mehta~\cite{Mehta:2006} as groupoids in the category of $\mathbb{Z}$-graded manifolds;  we will comment on this further. In this work we will content ourselves with working in the strictly commutative (in opposition to graded-commutative) setting. The basic idea is to take the def\/inition of a Lie groupoid and replace `smooth manifold' everywhere with `graded bundle'. To do this economically, we will make use of the description of graded bundles in terms of smooth manifolds equipped with a homogeneity structure of degree $k$,  so that morphisms of graded bundles are just smooth maps intertwining the respective homogeneity structures. In essence, we require that  the Lie groupoid structure functions respect the graded structure which is encoded in the homogeneity structure.

\subsection{The def\/inition of weighted Lie groupoids via homogeneity structures}
Let us formalise the comments in the beginning of this section with the following def\/inition;

\begin{Definition}
A \emph{weighted Lie groupoid} of degree $k$ is a Lie groupoid $\Gamma_{k} \rightrightarrows B_{k}$, together with a  homogeneity structure  $\underline{\rmh}\colon  \mathbb{R} \times \Gamma_{k} \rightarrow \Gamma_{k}$ of degree~$k$, such that $\underline{\rmh}_{t}$ is a Lie groupoid morphism for all $t \in \mathbb{R}$. Such homogeneity structures will be referred to as \emph{multiplicative homogeneity structures}.
\end{Definition}

\begin{Remark}
Equivalently one could think of a weighted Lie groupoid in terms of multiplicative weight vector f\/ields. This follows from the def\/inition of a weighted Lie groupoid and the fact that weight vector f\/ields associated with homogeneity structures are complete. However, it will be convenient to use the homogeneity structures rather than weight vector f\/ields from the perspective of Lie theory.
\end{Remark}

Let us unravel some of the structure here. First, as both the source and  target maps are submersions and by assumption $\Gamma_{k}$ is a graded bundle of degree $k$, $B_{k}$ is also a graded bundle of degree~$k$ (we will consider lower degree graded bundles to be included as higher degree graded bundles if necessary).  Secondly, it will be convenient to consider the homogeneity structure~$\underline{\rmh}$ as a pair of structures: $\underline{\rmh}_{t} = (\rmh_{t},\rmg_{t})$. Then, consider the  following commutative diagram
\begin{gather*}
\begin{xy}
(0,20)*+{\Gamma_{k}}="a"; (20,20)*+{\Gamma_{k}}="b";%
(0,0)*+{B_{k}}="c"; (20,0)*+{B_{k}}="d";%
{\ar "a";"b"}?*!/_3mm/{\rmh_{t}};
{\ar@<1.ex>"a";"c"} ;
{\ar@<-1.ex> "a";"c"} ?*!/^3mm/{\underline{s}} ?*!/_6mm/{\underline{t}};
{\ar@<1.ex>"b";"d"};%
{\ar@<-1.ex> "b";"d"}?*!/^3mm/{\underline{s}} ?*!/_6mm/{\underline{t}};  %
{\ar "c";"d"}?*!/^3mm/{\rmg_{t}};
\end{xy}
\end{gather*}
 which means that
 $\underline{s}\circ \rmh_{t} = \rmg_{t} \circ \underline{s}$,  and $\underline{t}\circ \rmh_{t} = \rmg_{t} \circ \underline{t}$.
That is, the source and target maps intertwine the respective homogeneity structure.
 Furthermore, we have $\rmh_{t}(g \cdot h) = \rmh_{t}(g) \cdot \rmh_{t}(h)$, meaning that the homogeneity structure respects the (partial) multiplication, or indeed vice versa. From the def\/inition of a Lie groupoid morphism, compatibility with the identities and inverses follows. In short, the graded structure is fully compatible with the Lie groupoid structure. Or, in other words, we have a  Lie groupoid object in the category of graded bundles or equivalently vice-versa.

\begin{Definition}
A \emph{morphism of weighted Lie groupoids} is a Lie groupoid morphism that intertwines the respective homogeneity structures.
\end{Definition}

In other words, a morphism of weighted Lie groupoids must in addition to being a morphism of Lie groupoids be a morphism of graded bundles. Clearly we have a forgetful functor from the category of weighted Lie groupoids to the category of Lie groupoids which forgets the homogeneity structure. Similar to the case of algebroids, the image of this forgetful functor is  not a~full subcategory of the category of Lie groupoids. We will denote the category of weighted Lie groupoids as $\catname{wGrpd}$.

\begin{Remark}
 Graded groupoids in the sense of Mehta \cite{Mehta:2006,Mehta:2009} are understood as groupoid objects in the category of $\mathbb{Z}$-graded manifolds.  The $\mathbb{Z}_{2}$-grading is a consequence of the  $\mathbb{Z}$-grading and so $\mathbb{Z}$-manifolds are not quite as general as Voronov's graded supermanifolds (cf.~\cite{Voronov:2001qf}).  Without details,  a graded groupoid is a pair of $\mathbb{Z}$-graded manifolds  $(\mathcal{G}, M)$, equipped with surjective submersions $s,t \colon \mathcal{G} \rightrightarrows M$ (the source and target), together with a partial multiplication map, an identity  map and an inverse map which all satisfy the appropriate commutative diagrams to def\/ine a groupoid structure. To be very clear, all the structural maps are morphisms in the category of $\mathbb{Z}$-graded manifolds.   One should of course  be reminded of the  def\/inition of Lie supergroups as group objects in the category of supermanifolds and indeed Mehta's constructions cover the case of Lie supergroupoids.

 From the \emph{definition} of a weighted Lie groupoid one can check directly that all the structural maps are morphisms in the category of graded bundles; all the maps relate the respective weight vector f\/ields (or equivalently the actions of the respective homogeneity structures). Thus, one could start with a def\/inition of a weighted Lie groupoid  in terms closer to that used by Mehta to def\/ine his graded groupoids. However, in light of Lie theory it will turn out to be very convenient to describe weighted Lie groupoids as Lie groupoids with a compatible homogeneity structure. Mehta by working in the category of $\mathbb{Z}$-graded manifolds did not have anything like a~homogeneity structure at his disposal. While it is straight forward to show that dif\/ferentiating graded groupoids leads to graded algebroids, the question of integration seems less tractable.
\end{Remark}

\subsection[Relation with $\mathcal{VB}$-groupoids]{Relation with $\boldsymbol{\mathcal{VB}}$-groupoids}

Bursztyn, Cabrera and del Hoyo  (cf.~\cite[Theorem~3.2.3]{Bursztyn:2014}) establish a one-to-one correspondence between $\mathcal{VB}$-groupoids, which have several equivalent def\/initions (see for example~\cite{Gracia-Saz:2010}), and Lie groupoids that have a regular action of $(\mathbb{R}, \cdot)$ as Lie groupoid morphisms. In essence, they show the equivalence between $\mathcal{VB}$-groupoids and weighted Lie groupoids of degree one. We take the attitude that the `correct def\/inition'  of a~$\mathcal{VB}$-groupoid is implied by this correspondence. We consider the double structure, that is the pair of Lie groupoids and the pair of vector bundles together with a collection of not-so obvious compatibility conditions,  as being derived  rather than fundamental.  This helps motivate our def\/inition of weighted Lie groupoids using homogeneity structures.

\begin{Proposition}
For any weighted Lie groupoid $\Gamma_{k} \rightrightarrows B_{k}$ there is an underlying `genuine' or `ungraded'  Lie groupoid $\mathcal{G} \rightrightarrows M$ described by the following surjective groupoid morphism
\begin{gather*}
\begin{xy}
(0,20)*+{\Gamma_{k}}="a"; (20,20)*+{\mathcal{G}}="b";%
(0,0)*+{B_{k}}="c"; (20,0)*+{B_{0}}="d";%
{\ar "a";"b"}?*!/_3mm/{\rmh_{0}};
{\ar@<1.ex>"a";"c"} ;
{\ar@<-1.ex> "a";"c"} ?*!/^3mm/{\underline{s}} ?*!/_6mm/{\underline{t}};
{\ar@<1.ex>"b";"d"};%
{\ar@<-1.ex> "b";"d"}?*!/^3mm/{\underline{\sigma}} ?*!/_6mm/{\underline{\tau}};  %
{\ar "c";"d"}?*!/^3mm/{\rmg_{0}};
\end{xy}
\end{gather*}
 Moreover, if $\Gamma_{k}$ is source simply-connected, then so is $\mathcal{G}$.
\end{Proposition}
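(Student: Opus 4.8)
The plan is to realise $\mathcal{G}$ as the image of the idempotent $\rmh_{0}$, equip it with the operations inherited by restriction from $\Gamma_{k}$, and verify that these make $\mathcal{G}\rightrightarrows M$ a Lie groupoid onto which $\underline{\rmh}_{0}=(\rmh_{0},\rmg_{0})$ is a surjective groupoid morphism. First I would record the standard facts from Section~\ref{sec:Preliminaries}: $\rmh_{0}$ is a smooth idempotent, $\rmh_{0}\circ\rmh_{0}=\rmh_{0\cdot 0}=\rmh_{0}$, whose image is the embedded base submanifold $\mathcal{G}:=(\Gamma_{k})_{0}$, and similarly $\rmg_{0}$ is an idempotent with image $M:=B_{0}$. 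Because the action $\underline{\rmh}$ is smooth in $t$, the groupoid-morphism identities valid for each $\underline{\rmh}_{t}$, namely $\underline{s}\circ\rmh_{t}=\rmg_{t}\circ\underline{s}$, $\underline{t}\circ\rmh_{t}=\rmg_{t}\circ\underline{t}$, $\rmh_{t}(g\cdot h)=\rmh_{t}(g)\cdot\rmh_{t}(h)$, together with the induced relations $\rmh_{t}(\Id_{x})=\Id_{\rmg_{t}(x)}$ and $\rmh_{t}(g^{-1})=\rmh_{t}(g)^{-1}$, hold in particular at $t=0$.

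Then I would use these $t=0$ identities to check that $\mathcal{G}$ is closed under all the groupoid operations: $\underline{s}\circ\rmh_{0}=\rmg_{0}\circ\underline{s}$ gives $\underline{s}(\mathcal{G})\subseteq M$, so the restrictions $\underline{\sigma}:=\underline{s}|_{\mathcal{G}}$ and $\underline{\tau}:=\underline{t}|_{\mathcal{G}}$ map $\mathcal{G}\to M$; multiplicativity at $t=0$ shows $\mathcal{G}$ is closed under the partial product; and the unit and inverse identities show $\iota(M)\subseteq\mathcal{G}$ and that $\mathcal{G}$ is closed under inversion. The groupoid axioms for $\mathcal{G}$ are then inherited from those of $\Gamma_{k}$. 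To upgrade this algebraic subgroupoid to a Lie groupoid the one point needing an argument is that $\underline{\sigma}$ and $\underline{\tau}$ are submersions. For this I differentiate the commuting square $\underline{s}\circ\rmh_{0}=\rmg_{0}\circ\underline{s}$ at a point $g\in\mathcal{G}$: there $\sT_{g}\rmh_{0}$ is the projector of $\sT_{g}\Gamma_{k}$ onto $\sT_{g}\mathcal{G}$ and $\sT\rmg_{0}$ is the projector onto $\sT M$. Given $w\in\sT_{\underline{\sigma}(g)}M$, pick $v$ with $\sT\underline{s}(v)=w$ (possible since $\underline{s}$ is a submersion); then $\sT\underline{s}(\sT\rmh_{0}\,v)=\sT\rmg_{0}(\sT\underline{s}\,v)=\sT\rmg_{0}\,w=w$ with $\sT\rmh_{0}\,v\in\sT_{g}\mathcal{G}$, so $\sT_{g}\underline{\sigma}$ is onto, and similarly for $\underline{\tau}$. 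Hence $\mathcal{G}\rightrightarrows M$ is a Lie groupoid and $(\rmh_{0},\rmg_{0})$ is the required surjective groupoid morphism.

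For the final assertion the key observation is that $\rmh_{t}$ fixes $\mathcal{G}=(\Gamma_{k})_{0}$ pointwise and $\rmg_{t}$ fixes $M=B_{0}$ pointwise for every $t$ (in homogeneous coordinates $\rmh_{t}$ rescales a weight-$w$ coordinate by $t^{w}$ and all positive-weight coordinates vanish on the base). Consequently, for $x\in M$ one has $\rmg_{t}(x)=x$, and $\underline{s}\circ\rmh_{t}=\rmg_{t}\circ\underline{s}$ then forces $\rmh_{t}$ to map the source fibre $(\Gamma_{k})_{x}=\underline{s}^{-1}(x)$ into itself for all $t$. Restricting the action, the map $H\colon[0,1]\times(\Gamma_{k})_{x}\to(\Gamma_{k})_{x}$, $H(t,g)=\rmh_{t}(g)$, is continuous with $H(1,\cdot)=\mathrm{id}$, with $H(0,\cdot)=\rmh_{0}$ taking values in $\mathcal{G}_{x}:=\mathcal{G}\cap(\Gamma_{k})_{x}=\underline{\sigma}^{-1}(x)$, and with $H(t,\cdot)|_{\mathcal{G}_{x}}=\mathrm{id}$; that is, $\mathcal{G}_{x}$ is a strong deformation retract of $(\Gamma_{k})_{x}$. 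Since $\Gamma_{k}$ is source simply-connected, $(\Gamma_{k})_{x}$ is connected and simply-connected for every $x\in B_{k}$, hence for every $x\in M\subseteq B_{k}$, and a space admitting a connected, simply-connected deformation retract shares these properties; therefore each source fibre $\mathcal{G}_{x}$ is connected and simply-connected, i.e.\ $\mathcal{G}$ is source simply-connected.

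I expect the main obstacle to be precisely the observation that makes the last paragraph work, namely that $\rmh_{t}$ preserves the source fibres lying over the base $M$ (which hinges on $\rmg_{t}$ fixing $M$ pointwise) so that the homogeneity structure descends to a strong deformation retraction on each such fibre; the verification that $\underline{\sigma},\underline{\tau}$ are submersions is the only other non-formal point, and it is dispatched by the projector computation above, while everything else is bookkeeping with the $t=0$ groupoid-morphism identities.
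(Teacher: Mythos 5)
Your proposal is correct and follows essentially the same route as the paper: realise $\mathcal{G}$ and $M$ as the images of the idempotents $\rmh_{0}$ and $\rmg_{0}$, use the groupoid-morphism property of $(\rmh_{t},\rmg_{t})$ at $t=0$ to obtain a subgroupoid with submersive source, and deduce source simply-connectedness from the retraction that $\rmh_{0}$ induces on source fibres over $M$. You merely supply more detail than the paper at two points --- the projector computation establishing that $\underline{\sigma}$ is a submersion, and upgrading the mere retraction (which already suffices, since a retraction is surjective on fundamental groups) to a strong deformation retraction via $t\mapsto\rmh_{t}$ --- but the underlying argument is the same.
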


\begin{proof}
By construction we have projections $ \rmh_{0}\colon \Gamma_{k} \rightarrow \Gamma_{0} := \mathcal{G}$ and $\rmg_{0}\colon B_{k} \rightarrow B_{0}:= M$. Thus $\mathcal{G} \subset \Gamma_{k}$ and $M \subset \mathcal{G}$ are embedded manifolds. Clearly $\mathcal{G}$ and $M$ def\/ine a (set theoretical) subgroupoid of $\Gamma_{k} \rightrightarrows B_{k}$. As $(\rmh_{t}, \rmg_{t})$ is a morphism of Lie groupoids for all $t \in \mathbb{R}$, including zero, it follows that the source map $\underline{\sigma}\colon \mathcal{G} \rightarrow M$ is a submersion. Thus we have the structure of a~Lie groupoid.  Now consider any point $p \in \mathcal{G}$. As $\rmh_{0}(p) = p$ the we have an induced retraction map $\underline{s}^{-1}(p) \rightarrow \underline{\sigma}^{-1}(p)$ and thus we have a homomorphisms between the respective fundamental groups. Thus if~$\Gamma_{k}$ is source simply-connected, then so is~$\mathcal{G}$.
\end{proof}

Actually, slightly modifying the above proof, we can derive the groupoid version of the tower of Lie algebroid structures (\ref{eqn:levels}).

\begin{Proposition}
If $\Gamma_{k} \rightrightarrows B_{k}$ is a weighted Lie groupoid of degree $k$, then we have the following tower of weighted groupoid structures of lower order and their morphisms:
\begin{gather*}
\begin{xy}
(0,20)*+{\Gamma_{k}}="a1"; (20,20)*+{\Gamma_{k-1}}="a2"; (40,20)*+{\cdots}="a3";   (60,20)*+{\Gamma_{1}}="a4";  (80,20)*+{\mathcal{G}}="a5"; %
(0,0)*+{B_{k}}="b1"; (20,0)*+{B_{k-1}}="b2";(40,0)*+{\cdots}="b3";  (60,0)*+{B_{1}}="b4"; (80,0)*+{M}="b5";%
{\ar "a1";"a2"}?*!/_3mm/{\tau^{k}};{\ar "a2";"a3"}?*!/_3mm/{\tau^{k-1}};{\ar "a3";"a4"}?*!/_3mm/{\tau^{2}};{\ar "a4";"a5"}?*!/_3mm/{\tau^{1}};
{\ar@<1.ex>"a1";"b1"} ;
{\ar@<-1.ex> "a1";"b1"} ?*!/^3mm/{\underline{s}_{k}} ?*!/_6mm/{\underline{t}_{k}};
{\ar@<1.ex>"a2";"b2"}; {\ar@<-1.ex> "a2";"b2"}?*!/^4mm/{\underline{s}_{k-1}} ?*!/_8mm/{\underline{t}_{k-1}}; {\ar@<1.ex>"a4";"b4"}; {\ar@<-1.ex> "a4";"b4"}?*!/^3mm/{\underline{s}_{1}} ?*!/_6mm/{\underline{t}_{1}}; {\ar@<1.ex>"a5";"b5"}; {\ar@<-1.ex> "a5";"b5"}?*!/^3mm/{\underline{\sigma}} ?*!/_6mm/{\underline{\tau}}; %
{\ar "b1";"b2"}?*!/^3mm/{\pi^{k}};{\ar "b2";"b3"}?*!/^3mm/{\pi^{k-1}};{\ar "b3";"b4"}?*!/^3mm/{\pi^{2}};{\ar "b4";"b5"}?*!/^3mm/{\pi^{1}};
\end{xy}
\end{gather*}
In particular, $\Gamma_{1} \rightrightarrows B_{1}$ is a $\mathcal{VB}$-groupoid.
\end{Proposition}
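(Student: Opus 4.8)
The plan is to mimic the proof of the preceding proposition (which handled the extreme level $j=0$) together with the algebroid tower of Proposition~\ref{prop:algebroid tower}, exploiting the functoriality of the reduction of a graded bundle to a fixed level. As a graded bundle of degree $k$, the total space $\Gamma_k$ carries the canonical tower of polynomial fibrations $\Gamma_k \to \Gamma_{k-1} \to \cdots \to \Gamma_1 \to \Gamma_0 = \mathcal{G}$, where $\Gamma_j$ is the quotient obtained by retaining only the coordinates of weight $\le j$; each $\Gamma_j$ is itself a graded bundle of degree $j$ carrying a reduced homogeneity structure $\underline{\rmh}^{(j)}$, and the same holds for the base tower $B_k \to \cdots \to B_0 = M$. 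The assignment $\Gamma_k \mapsto \Gamma_j$ is functorial: any morphism of graded bundles respects weight, hence sends coordinates of weight $\le j$ to polynomials in coordinates of weight $\le j$, and therefore descends to a morphism of the level-$j$ reductions.

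First I would apply this reduction functor to each structural map of the weighted Lie groupoid $\Gamma_k \rightrightarrows B_k$. By the definition of a weighted Lie groupoid, the source, target, unit, inversion and partial multiplication are all morphisms of graded bundles, so each descends to a corresponding map on the level-$j$ objects $\Gamma_j$ and $B_j$. The groupoid axioms are encoded as commutative diagrams among these maps --- including one over the submanifold $\Gamma_k^{(2)}$ of composable pairs --- and since any functor preserves commutative diagrams, the descended maps satisfy the same axioms. Submersivity of $\underline{s}_j$ and $\underline{t}_j$ is inherited from that of $\underline{s}$ and $\underline{t}$, verified in homogeneous local coordinates exactly as in the already-treated case $j=0$. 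Hence each $\Gamma_j \rightrightarrows B_j$ is a Lie groupoid.

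Next I would upgrade these to weighted Lie groupoids. Applying the reduction functor to the family $\{\underline{\rmh}_t\}_{t \in \mathbb{R}}$ produces the reduced homogeneity structure $\{\underline{\rmh}^{(j)}_t\}$ on $\Gamma_j$; since each $\underline{\rmh}_t$ is a Lie groupoid morphism and the functor preserves this property, each $\underline{\rmh}^{(j)}_t$ is again a Lie groupoid morphism. Thus $\underline{\rmh}^{(j)}$ is multiplicative and $\Gamma_j \rightrightarrows B_j$ is a weighted Lie groupoid of degree $j$. The projections $\tau^j \colon \Gamma_j \to \Gamma_{j-1}$ are morphisms of graded bundles by construction of the tower, and they are Lie groupoid morphisms because they arise as the natural maps relating the reduction functors, i.e.\ they commute with all the structural maps; as they also intertwine the homogeneity structures, they are morphisms of weighted Lie groupoids. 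Finally, $\Gamma_1 \rightrightarrows B_1$ is a weighted Lie groupoid of degree one, which by the correspondence of Bursztyn, Cabrera and del Hoyo \cite{Bursztyn:2014} recalled above is exactly a $\mathcal{VB}$-groupoid.

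The main obstacle I anticipate is the descent of the partial multiplication, which lives only on the composable-pairs submanifold $\Gamma_k^{(2)} = \{(g,h) \colon \underline{s}(g) = \underline{t}(h)\}$ rather than on $\Gamma_k \times \Gamma_k$. One must check that $\Gamma_k^{(2)}$ is itself a graded bundle and that its level-$j$ reduction is precisely the composable-pairs manifold $\Gamma_j^{(2)}$ determined by the reduced source and target. This follows because $\underline{s}$ and $\underline{t}$ intertwine the homogeneity structures, so the fibred product taken over them is compatible with the grading; together with the coordinate check of submersivity at each level, this is the only point requiring more than a purely formal functoriality argument.
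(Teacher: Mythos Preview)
Your proposal is correct and follows essentially the same route the paper indicates: the paper does not give a separate proof of this proposition but simply remarks that it is obtained by ``slightly modifying the above proof'' (the one for the extreme level $j=0$), which is precisely what you do by running the level-$j$ reduction functor through all the structural maps. Your treatment is in fact more thorough than the paper's, which leaves the details (in particular the descent of the partial multiplication through $\Gamma_k^{(2)}$) entirely implicit.
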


\subsection{Examples of weighted Lie groupoids}

In this subsection we sketch some simple examples of weighted Lie groupoids following adaptation of some of the standard examples of Lie groupoids and graded bundles. We expect more intricate examples to follow, some of which maybe more useful in applications.

\begin{Example}\label{exa:1}
All graded bundles can be considered as weighted Lie groupoids over themselves by taking all the structure maps to be the identity. This leads to the notion of a \emph{weighted unit Lie groupoid}.
\end{Example}

\begin{Example}\label{exa:2}
Just as Lie groups are examples of Lie groupoids (over a single point), we can consider \emph{weighted Lie groups} of degree $k$ which are Lie groups $G_{k}$ equipped with a compatible homogeneity structure of degree~$k$. The compatibility condition is simply $\rmh_{t}(g \cdot h) = \rmh_{t}(g) \cdot\rmh_{t}(h) $  for all elements $g, h \in G_{k}$.
\end{Example}

\begin{Example}\label{exa:3}
If $F_{k} \rightarrow M$ is a graded bundle of degree~$k$, then we can construct the \emph{weighted pair groupoid} in the obvious way; $F_{k}\times F_{k} \rightrightarrows F_{k}$. Underlying this is the `genuine' pair groupoid $M \times M \rightrightarrows M$. Moreover, as we have a series of af\/f\/ine f\/ibrations associated with any graded bundle the weighted pair groupoid can be `f\/ibred' as $F_{k}\times_{F_{q}} F_{k} \rightrightarrows F_{k}$ in the obvious way.
\end{Example}

\begin{Example}\label{exa:4}
Let $\mathcal{G} \rightrightarrows M$ be a Lie groupoid. Then associated with this is the \emph{higher tangent Lie groupoid}  $\sT^{k}\mathcal{G} \rightrightarrows \sT^{k}M$, which is naturally a weighted Lie groupoid and is formed by applying the higher tangent functor of order $k$ to the structure maps of $\mathcal{G} \rightrightarrows M$. Clearly the initial Lie groupoid is the `genuine' Lie groupoid underlying the higher tangent Lie groupoid.
\end{Example}

\begin{Example}\label{exa:5}
Similar to the previous example, one can consider iterating  the tangent functor $k$-times to construct the \emph{iterated tangent Lie groupoid} $\sT^{(k)}\mathcal{G}\rightrightarrows \sT^{(k)}M$. By passing to total weight we have a weighted Lie groupoid of degree $k$.
\end{Example}

\begin{Example}\label{exa:6}
As already commented on,  $\mathcal{VB}$-groupoids are weighted Lie groupoids of degree~$1$.  Standard examples of $\mathcal{VB}$-groupoids include $\sT \mathcal{G} \rightrightarrows \sT M$ and $\sT^{*}\mathcal{G} \rightrightarrows \textnormal{A}^{*}(\mathcal{G})$ for any Lie groupoid $\mathcal{G} \rightrightarrows M$.
\end{Example}

\begin{Example}\label{exa:7}
Combining Examples~\ref{exa:4} and~\ref{exa:6}, we can consider the \emph{higher Pontryagin Lie groupoid}
\begin{gather*}
\sT^{k}\mathcal{G} \times_{\mathcal{G}} \sT^{*}\mathcal{G} \rightrightarrows \sT^{k}M \times_{M} \rmA^{*}(\mathcal{G}),
\end{gather*}
 as a weighted Lie groupoid by passing from the natural bi-weight to total weight.
\end{Example}

\begin{Example}\label{exa:8}
Let $(G_{l}, \rmh)$ be a weighted Lie group and $(F_{k}, \rmg)$ be a graded bundle. We assume that $G_{l}$ acts on $F_{k}$. Then we can construct the \emph{weighted action Lie groupoid} as follows; $G_{l}\times F_{k} \rightrightarrows F_{k}$, where the source and target maps are given by
\begin{gather*}
\underline{s}(g,x)= x, \qquad
 \underline{t}(g,x) = gx,
\end{gather*}
  and the (partial) multiplication being
\begin{gather*}
(h, y)\cdot (g, x) = (hg, x).
\end{gather*}
We def\/ine $\underline{\rmh} := (\rmh, \rmg)$ as the homogeneity structure on the Lie groupoid. Compatibility with the source map and partial multiplication is automatic from the def\/inition of a homogeneity structure, but compatibility with the target map requires the natural condition $ \rmg_{t}(g x) =(\rmh_{t}g)(\rmg_{t}x)$. Thus we will def\/ine a weighted action Lie groupoid via the construction of an action Lie groupoid, with the addition of compatible homogeneity structures as described above.
\end{Example}

\begin{Example}\label{exa:9}
Any $\mathcal{GL}$-bundle $(D_{k}, \widehat{\rmh}, \widehat{\rml})$ can be considered as a weighed Lie groupoid in the following way. The base is def\/ined as $B_{k-1} := \widehat{\rml}_{0}(D_{k})$ and the source and target maps are set to the projection $\underline{s} = \underline{t} = \widehat{\rml}_{0}$. Thus, the $\underline{s}$-f\/ibres have the structure of graded spaces, cf.~\cite{Grabowski2012} (i.e., graded bundles over a point). The partial multiplication is just the addition in the $\underline{s}$-f\/ibres.  The compatibility of the Lie groupoid and graded bundle structures follows directly from the def\/inition of a $\mathcal{GL}$-bundle.
\end{Example}

\begin{Example}\label{exa:10}
The f\/ibred product
\begin{gather*}
\sT \mathcal{G} \times_{\mathcal{G}} \sT \mathcal{G} \times_{\mathcal{G}} \cdots \times_{\mathcal{G}} \sT \mathcal{G} \rightrightarrows \sT M \times_{M} \sT M \times_{M} \cdots \times_{M}\sT M,
\end{gather*}
where $\mathcal{G}\rightrightarrows M$ is an arbitrary Lie groupoid, can be considered as a weighted Lie groupoid in the following way. The natural homogeneity structure associated with the $i$-th factor $\sT \mathcal{G} \rightrightarrows \sT M$ can be shifted by $i \in \mathbb{N}$ and then, by passing to total weight, we have a weighted Lie groupoid of degree~$k$, assuming we the f\/ibred product consists of~$k$ factors.
\end{Example}

\section{Lie theory for weighted groupoids and algebroids}\label{sec:Lie theory}

We now turn our attention to the Lie theory relating weighted Lie groupoids and weighted Lie algebroids. The dif\/ferentiation of a weighted Lie groupoid produces rather naturally a weighted Lie algebroid. One can use the standard construction here, just taking a little care with the weights. The integration of a weighted Lie algebroid to a weighted Lie groupoid also follows via the standard constructions, again taking care with the graded structure. We will not address the full question of global integrability, rather we will assume integrability as a Lie algebroid and show that the weighted structure is naturally inherited.

\subsection{Dif\/ferentiation of weighted Lie groupoids}
It is clear  that as a Lie groupoid a weighted Lie groupoid always admits a Lie algebroid associated with it following the classical constructions. The question is \emph{if the associated Lie algebroid is in fact a weighted Lie algebroid}?

\begin{Theorem}\label{thm:groupoids to algebroids}
If $\Gamma_{k} \rightrightarrows B_{k}$ is a weighted Lie groupoid of degree $k$ with respect to a homogeneity structure $h$ on $\Gamma_k$, then $\rmA(\Gamma_{k}) \rightarrow B_{k}$ is a weighted Lie algebroid of degree $k+1$ with respect to the homogeneity structure $\widehat{h}$ defined by
\begin{gather}\label{wh}
\widehat{h}_t=(h_t)'=\Lie(h_t) .
\end{gather}
\end{Theorem}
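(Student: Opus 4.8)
The plan is to verify directly the two defining features of a weighted Lie algebroid of degree $k+1$, using the alternative characterisation recalled earlier (a weighted Lie algebroid of degree $k+1$ is a Lie algebroid $(\Pi E,Q)$ equipped with a homogeneity structure of degree $k$ all of whose maps are Lie algebroid morphisms). The point is that $\rmA(\Gamma_k)$ is already a Lie algebroid by the classical construction, so everything reduces to producing the homogeneity structure $\widehat h$ of (\ref{wh}) and checking that each $\widehat h_t$ is a Lie algebroid morphism. The guiding idea is that the Lie functor transports the multiplicative $({\mathbb R},\cdot)$-structure on $\Gamma_k$ over to $\rmA(\Gamma_k)$.

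First I would check that $\widehat h_t=\Lie(h_t)$ is a well-defined self-map of $\rmA(\Gamma_k)=\ker(\sT\underline s)|_{B_k}$. Since $(h_t,\rmg_t)$ is a Lie groupoid morphism we have $\underline s\circ h_t=\rmg_t\circ\underline s$ and $h_t(B_k)\subseteq B_k$; differentiating the first identity gives $\sT\underline s\circ\sT h_t=\sT\rmg_t\circ\sT\underline s$, so $\sT h_t$ carries $\ker\sT\underline s$ into itself, while the second shows it preserves base points in $B_k$. Hence $\widehat h_t:=\sT h_t|_{\rmA(\Gamma_k)}$ maps $\rmA(\Gamma_k)$ to itself, and this is precisely $\Lie(h_t)$. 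The monoid-action and morphism properties are then immediate from functoriality of $\Lie$: from $h_t\circ h_s=h_{ts}$ and $h_1=\mathrm{id}$ we obtain $\widehat h_t\circ\widehat h_s=\widehat h_{ts}$ and $\widehat h_1=\mathrm{id}$, and because each $h_t$ is a groupoid morphism each $\widehat h_t$ is a Lie algebroid morphism. Smoothness of $(t,v)\mapsto\widehat h_t(v)$ follows from smoothness of $h\colon{\mathbb R}\times\Gamma_k\to\Gamma_k$ via the coordinate expression $\sT h_t(x,\dot x)=(h(t,x),\partial_x h(t,x)\,\dot x)$, which is manifestly smooth down to $t=0$. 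Thus $\widehat h$ is a genuine homogeneity structure on $\rmA(\Gamma_k)$.

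Because each $\widehat h_t$ is fibrewise linear it commutes with the scalar multiplication (the Euler/linear homogeneity structure) of the vector bundle $\rmA(\Gamma_k)\to B_k$, so the two structures together make $\rmA(\Gamma_k)$ a $\mathcal{GL}$-bundle, and after parity reversion a $\mathcal{GL}$-antibundle. The delicate point — where one must ``take a little care with the weights'' — is the degree count: one must confirm that $\widehat h$ has degree exactly $k$, so that combined with the degree-one linear structure the total degree is $k+1$. Here $\widehat h=\Lie(h)$ is the restriction to $\rmA(\Gamma_k)\subseteq\sT\Gamma_k|_{B_k}$ of the tangent lift $\sT h$, and the tangent lift of a degree-$k$ homogeneity structure is again of degree $k$ (as for the $\mathcal{GL}$-bundle $\sT F_{k-1}$ above), giving degree $\le k$. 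For the reverse bound one observes that $\widehat h$ covers the homogeneity structure $\rmg$ on the zero section $B_k$ and acts on the vertical directions with the weights inherited from the source-fibre coordinates of $\Gamma_k$; the maximal weight occurring among these is exactly the degree $k$ of $\Gamma_k$, so the degree is at least $k$.

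With the degree pinned down to $k$, the alternative characterisation of weighted Lie algebroids applies to $\big(\Pi\rmA(\Gamma_k),\Pi\widehat h,Q\big)$ and yields that $\rmA(\Gamma_k)$ is a weighted Lie algebroid of degree $k+1$. As a consistency check, for $k=1$ this recovers the known fact that a $\mathcal{VB}$-groupoid differentiates to a $\mathcal{VB}$-algebroid, the latter being a weighted Lie algebroid of degree two (bi-degree $(1,1)$). I expect the degree verification in the third paragraph to be the only real obstacle; the morphism and monoid-action properties are formal consequences of the functoriality of $\Lie$.
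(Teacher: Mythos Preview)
Your argument is correct. The first half---constructing $\widehat h$ as the restriction of the tangent lift, verifying it is a smooth $(\mathbb{R},\cdot)$-action, and noting that fibrewise linearity of $\sT h_t$ makes $\rmA(\Gamma_k)$ a $\mathcal{GL}$-bundle of degree~$k$---is exactly what the paper does in its Step~1. Where you diverge is in the second half. The paper establishes the weighted-algebroid condition by an explicit bookkeeping argument: it identifies homogeneous sections with functions on $\rmA(\Gamma_k)^*$, tracks the bi-weight shift $(r-1,1)\to(r-1-k,-1)$ under the passage to right-invariant vector fields, computes the bracket, and shifts back, concluding that the Lie algebroid bracket carries weight $-(k+1)$. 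You instead invoke the alternative characterisation (the Proposition following the definition of a weighted Lie algebroid): since each $h_t$ is a groupoid morphism, functoriality of $\Lie$ gives that each $\widehat h_t=\Lie(h_t)$ is a Lie algebroid morphism, and that is all the Proposition asks for once the degree is pinned down. Your route is shorter and more conceptual, avoiding any coordinate-level weight chase; the paper's route has the virtue of making the bracket weight $-(k+1)$ visible directly, which is useful for the reader's intuition and for the later discussion of the weight of the Courant--Dorfman bracket. Either is a complete proof.
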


\begin{proof}
We need only follow the classical constructions taking care of the weight as we proceed. In particular, we need only show that $\rmA(\Gamma_{k})$ has the structure of a $\mathcal{GL}$-bundle and that the corresponding Lie algebroid brackets are of weight~$-(k+1)$.

 1.~As  $\rmA(\Gamma_{k}) :=  \ker ( \sT\underline{s}  )  |_{\iota(B_{k})}$, it is clear that as a subbundle of $\sT \Gamma_{k}$ that we have the structure of a  $\mathcal{GL}$-bundle. The degree  $k$ homogeneity structure $\widehat{h}$ is inherited from the tangent lift of the homogeneity structure on $\Gamma_{k}$ and the degree $1$ homogeneity structure is inherited from the natural one related to the vector bundle structure of tangent bundles. In other words, $\widehat{h}_t=(h_t)'$ is the `dif\/ferential' of~$h$.

 2.~Sections of weight $r$  are by our def\/inition functions on $\rmA(\Gamma_{k})^{*}$ of bi-weight $(r-1,1)$, see Section~\ref{sec:Preliminaries}.  Thus, associating sections with right invariant vector f\/ields requires a~shift in the weights of $(-k, -2)$, which comes from associating momenta with derivatives, i.e., use weighted principle symbols. Thus, the prolongation of a~section of weight~$r$ to a~left invariant vector f\/ield is a~vector f\/ield  of bi-weight $(r-1-k,-1)$.  Thus if we take the Lie bracket of two sections of weight~$r_{1}$ and $r_{2}$ as right invariant vector f\/ields the resulting will be a  right invariant vector f\/ield of weight $(r_{1}+ r_{2}- 2 - 2k, -1)$. Now upon restriction and  `shifting back' to functions on~$\rmA(\Gamma_{k})^{*}$, the resulting section is of bi-weight $(r_{1}+ r_{2} -1 -(k+1), 1)$. Thus the Lie algebroid bracket carries weight $-(k+1)$.
\end{proof}

Theorem~\ref{thm:groupoids to algebroids} tells us that weighted groupoids are the objects that integrate weighted algebroids, but of course it does not tell us that we can actually (globally) integrate weighted Lie algebroids.  As a `corollary' we see that $\mathcal{VB}$-groupoids are the objects that integrate $\mathcal{VB}$-algebroids.

\begin{Example}
The weighted Lie algebroid associated with the weighted pair Lie groupoid $F_{k}\times F_{k} \rightrightarrows F_{k}$  is the tangent bundle $\sT F_{k}$.
\end{Example}

\begin{Example}
The $k$-th order tangent bundle of a Lie groupoid $\sT^{k}\mathcal{G}$ naturally comes with the structure of a weighted Lie groupoid over $\sT^{k}M$. The associated Lie algebroid is $\textnormal{A}(\sT^{k}\mathcal{G}) \simeq \sT^{k}\textnormal{A}(\mathcal{G})$, see  for example~\cite{KouotchopWamba:2013} for details. From~\cite{Bruce:2014} we know that that  $\sT^{k}\textnormal{A}(\mathcal{G})$ is a weighted Lie algebroid.
\end{Example}

\begin{Proposition}
Via the appropriate forgetful functors, the Lie functor restricts to the sub\-ca\-te\-gories of weighted Lie groupoids and weighted Lie algebroids, i.e.,
\begin{gather*}
\catname{wGrpd} \stackrel{\Lie}{\xrightarrow{\hspace*{30pt}}} \catname{wAlgd}.
\end{gather*}
\end{Proposition}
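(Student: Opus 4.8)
The plan is to reduce everything to \emph{functoriality of the ordinary Lie functor}. Since the restricted assignment is literally the unrestricted Lie functor evaluated on a subclass of groupoids and morphisms, preservation of identities and composition will be inherited verbatim; the only things that genuinely require checking are that objects and morphisms land in $\catname{wAlgd}$. On objects there is nothing new to do: Theorem~\ref{thm:groupoids to algebroids} already establishes that a weighted Lie groupoid $(\Gamma_{k},h)$ differentiates to a weighted Lie algebroid $\rmA(\Gamma_{k})$ equipped with the homogeneity structure $\widehat{h}_{t}=\Lie(h_{t})$. Thus the substantive content is entirely at the level of morphisms.

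First I would take a morphism of weighted Lie groupoids $\Phi\colon \Gamma_{k}\to \Gamma_{k}'$; by definition this is a Lie groupoid morphism intertwining the respective multiplicative homogeneity structures, so that $\Phi\circ h_{t}=h_{t}'\circ\Phi$ for all $t\in\mathbb{R}$. Applying $\Lie$ already yields a Lie algebroid morphism $\Phi'=\Lie(\Phi)$, so it remains only to verify that $\Phi'$ intertwines the algebroid-level homogeneity structures, i.e. $\Phi'\circ\widehat{h}_{t}=\widehat{h}_{t}'\circ\Phi'$. Here the single essential ingredient is functoriality: using $\widehat{h}_{t}=\Lie(h_{t})$ and $\widehat{h}_{t}'=\Lie(h_{t}')$ one computes
\begin{gather*}
\Phi'\circ\widehat{h}_{t}=\Lie(\Phi)\circ\Lie(h_{t})=\Lie(\Phi\circ h_{t})=\Lie\big(h_{t}'\circ\Phi\big)=\Lie\big(h_{t}'\big)\circ\Lie(\Phi)=\widehat{h}_{t}'\circ\Phi',
\end{gather*}
where the central equality is exactly the multiplicativity of $\Phi$ as a morphism of weighted Lie groupoids. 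Hence $\Phi'$ is a morphism of weighted Lie algebroids, and morphisms are sent to morphisms.

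The step that I expect to demand the most care is not so much an obstacle as a bookkeeping point already subsumed in Theorem~\ref{thm:groupoids to algebroids}: one must be certain that $t\mapsto\widehat{h}_{t}=\Lie(h_{t})$ is a genuine homogeneity structure, that is, a \emph{smooth} action of the monoid $(\mathbb{R},\cdot)$. The monoid law $\widehat{h}_{t}\circ\widehat{h}_{s}=\widehat{h}_{ts}$ follows once more from functoriality applied to $h_{t}\circ h_{s}=h_{ts}$, while smoothness in the parameter is made transparent by the explicit description of the Lie functor recalled earlier in the excerpt: $\Lie(h_{t})$ is the restriction of the differential $\sT h_{t}$ to the $\underline{s}$-fibres along the unit submanifold, and $\sT h_{t}$ depends smoothly on $t$ precisely because $h$ is a smooth action. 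With these points in place, functoriality of the unrestricted Lie functor supplies preservation of identities and composition automatically, and the restriction $\catname{wGrpd}\to\catname{wAlgd}$ is established.
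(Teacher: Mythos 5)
Your proposal is correct and follows essentially the same route as the paper: the paper's proof simply cites Theorem~\ref{thm:groupoids to algebroids} for objects and the fact that weighted morphisms on both sides intertwine the homogeneity structures, which is exactly what your functoriality computation $\Lie(\Phi)\circ\Lie(h_{t})=\Lie(\Phi\circ h_{t})=\Lie(h_{t}'\circ\Phi)=\Lie(h_{t}')\circ\Lie(\Phi)$ makes explicit. Your additional check that $\widehat{h}_{t}=\Lie(h_{t})$ is a genuine homogeneity structure is already contained in that theorem, so nothing is missing.
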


\begin{proof}
This follows from Theorem~\ref{thm:groupoids to algebroids} and the fact that both weighted Lie groupoid and weighted Lie algebroid morphisms intertwine the respective actions of the homogeneity structure.
\end{proof}

\subsection{Integration  of weighted Lie algebroids}
 We will not address the full problem of integration of Lie algebroids here, one should consult Crainic and  Fernandes~\cite{Crainic:2003} for details of the obstruction to (global) integrability.  For a details of the obstruction to the integration of $\mathcal{VB}$-algebroids see~\cite{Brahic:2014}. We remark though, that passing to the world of dif\/ferentiable stacks and the notion of a Weinstein groupoid all Lie algebroids are integrable~\cite{Tseng:2006}, though in this work we will strictly stay in the world of Lie groupoids. The question we address is not one of the integrability of weighted Lie algebroids as Lie algebroids, but rather if they integrate to weighted Lie groupoids. That is, \emph{does the homogeneity structure on the weighted Lie algebroid integrate to a homogeneity structure on the associated source simply-connected Lie groupoid such that we have the structure of a weighted Lie groupoid?}

 \begin{Theorem}
Let $D_{k+1} \rightarrow B_{k}$ be a weighted Lie algebroid of degree $k+1$ with respect to a~homogeneity structure $\widehat{\rm h}$ and $\Gamma_{k}$ its source simply-connected integration groupoid.
Then $\Gamma_{k}$ is a~weighted Lie groupoid of degree~$k$ with respect to the homogeneity structure~${\rm h}$ uniquely determined by~\eqref{wh}.
\end{Theorem}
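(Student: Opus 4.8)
The plan is to reverse the differentiation argument of Theorem~\ref{thm:groupoids to algebroids} and use Lie~II (Theorem~\ref{thm:integration morphisms}) as the only integration input. By the proposition characterising weighted Lie algebroids as Lie algebroids equipped with a compatible homogeneity structure, the datum $\widehat{h}$ is a homogeneity structure $\widehat{h}\colon \mathbb{R}\times D_{k+1}\to D_{k+1}$ of degree $k$ such that every $\widehat{h}_t$, for \emph{all} $t\in\mathbb{R}$ including $t=0$, is a Lie algebroid endomorphism of $\rmA(\Gamma_k)=D_{k+1}$ covering a degree-$k$ homogeneity structure $\widehat{g}_t$ on the base $B_k$. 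Since $\Gamma_k$ is source simply-connected, the task reduces to integrating this action of the monoid $(\mathbb{R},\cdot)$ to an action on $\Gamma_k$ by Lie groupoid morphisms, and then checking it is a smooth, degree-$k$ homogeneity structure.

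Rather than integrate each $\widehat{h}_t$ separately (which would only give smoothness slice by slice), I would integrate the whole family at once. Form the product Lie groupoid $\mathcal{P}:=(\mathbb{R}\rightrightarrows\mathbb{R})\times\Gamma_k$, where $\mathbb{R}\rightrightarrows\mathbb{R}$ is the unit groupoid; its source fibres are $\{t\}\times\underline{s}^{-1}(x)$, so $\mathcal{P}$ is again source simply-connected, and its Lie algebroid is the pullback $\rmA(\mathcal{P})=\mathrm{pr}_{B_k}^{*}D_{k+1}$, the $\mathbb{R}$-factor contributing the zero algebroid (hence nothing to the homological vector field). Viewed as a single map $\widehat{h}\colon \rmA(\mathcal{P})\to D_{k+1}$ over $\widehat{g}\colon \mathbb{R}\times B_k\to B_k$, the homogeneity structure is a Lie algebroid morphism: it is fibrewise linear and smooth, anchor-compatibility reduces to that of each slice because the $\mathbb{R}$-direction has vanishing anchor, and in the $Q$-manifold picture one has $Q_{\mathcal{P}}=Q$, so $\Pi\widehat{h}$ intertwines the homological vector fields precisely because each slice $\Pi\widehat{h}_t$ does. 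Lie~II then yields a \emph{single smooth} groupoid morphism $H\colon \mathbb{R}\times\Gamma_k\to\Gamma_k$ with $H'=\widehat{h}$; setting $h_t:=H(t,\cdot)$ and restricting $H$ to the subgroupoid $\{t\}\times\Gamma_k\cong\Gamma_k$ shows each $h_t$ is a groupoid endomorphism with $(h_t)'=\widehat{h}_t$, which is exactly~\eqref{wh}, now visibly smooth in $(t,g)$ with $t=0$ included.

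It remains to see that $t\mapsto h_t$ is multiplicative. Both $h_t\circ h_s$ and $h_{ts}$ are endomorphisms of the source simply-connected groupoid $\Gamma_k$, and functoriality of the Lie functor gives $(h_t\circ h_s)'=\widehat{h}_t\circ\widehat{h}_s=\widehat{h}_{ts}=(h_{ts})'$; the uniqueness clause of Lie~II therefore forces $h_t\circ h_s=h_{ts}$, and likewise $\widehat{h}_1=\mathrm{id}$ gives $h_1=\mathrm{id}$. Thus $h$ is a smooth action of $(\mathbb{R},\cdot)$ on $\Gamma_k$ by Lie groupoid morphisms, i.e.\ a multiplicative homogeneity structure, so $\Gamma_k$ is a weighted Lie groupoid. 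Its degree is $k$, since differentiation (tangent lift followed by restriction to $\underline{s}$-fibres over $B_k$, as in Theorem~\ref{thm:groupoids to algebroids}) preserves the degree of a homogeneity structure, so a degree-$k$ structure on $\Gamma_k$ is exactly what differentiates to the given degree-$k$ structure $\widehat{h}$ on $D_{k+1}$.

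For uniqueness, any multiplicative homogeneity structure satisfying~\eqref{wh} has $\widehat{h}_t=(h_t)'$ prescribed, and Lie~II uniqueness over the source simply-connected $\Gamma_k$ fixes each $h_t$, hence $h$. The \emph{main obstacle} I expect is precisely the verification carried out in the second paragraph: that the family $\widehat{h}$ genuinely assembles into a Lie algebroid morphism out of the product algebroid $\rmA(\mathcal{P})$, so that a single application of Lie~II delivers smoothness across the singular value $t=0$, where $h_0$ retracts $\Gamma_k$ onto its underlying groupoid. Everything else is functoriality of $\Lie$ together with the uniqueness in Lie~II.
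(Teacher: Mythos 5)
Your proposal is correct, and its skeleton --- integrate the maps $\widehat{\rmh}_t$ by Lie~II (Theorem~\ref{thm:integration morphisms}), deduce $\rmh_t\circ \rmh_s=\rmh_{ts}$ and $\rmh_1=\mathrm{id}$ from the uniqueness clause together with functoriality of $\Lie$, and identify the degree via differentiation --- is the same as in the paper's proof. The genuine difference is how joint smoothness of $(t,g)\mapsto \rmh_t(g)$, in particular across the singular value $t=0$, is established. The paper integrates each $\widehat{\rmh}_t$ separately and then obtains smoothness of the assembled action from the internal construction of the integration: the smooth action $\widehat{\rmh}$ induces a smooth action on the Banach manifold of $C^1$-admissible paths in $D_{k+1}$, which descends to $\Gamma_k$. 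You instead apply Lie~II a single time to the morphism $\widehat{\rmh}\colon \rmA(\mathcal{P})\simeq\mathrm{pr}^{*}_{B_k}D_{k+1}\to D_{k+1}$ out of the product groupoid $\mathcal{P}=(\mathbb{R}\rightrightarrows\mathbb{R})\times\Gamma_k$, which is still source simply-connected; your check that the slices assemble into one Lie algebroid morphism is sound, because $Q_{\mathcal{P}}$ has no $\partial_t$-component, so $\Pi\widehat{\rmh}$-relatedness of the homological vector fields is exactly slice-wise relatedness, while fibrewise linearity follows since each $\widehat{\rmh}_t$ commutes with the degree-one (Euler) structure. What your route buys is that Lie~II is used as a black box: no appeal to the Crainic--Fernandes path-space model of $\Gamma_k$ is needed, and smoothness in $(t,g)$, including at $t=0$, comes for free from the single application; what the paper's route buys is brevity, at the cost of invoking that model. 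One caveat: your final step, ``differentiation preserves the degree of a homogeneity structure,'' is asserted rather than proved, and this is precisely the point the paper settles by a coordinate computation in the proof of the present theorem (source-adapted coordinates $(x^{\alpha}_{u},Y^{I}_{u})$, the tangent lift, and the observation that the weights of the fibre coordinates $\dot{Y}$ on $D_{k+1}\subset\sT\Gamma_k$ are directly inherited from those of $Y$ on $\Gamma_k$, cf.\ Theorem~\ref{thm:groupoids to algebroids}); you should cite or reproduce that computation to close the loop, after which the uniqueness statement via \eqref{wh} stands as you argue.
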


\begin{proof}
A weighted Lie algebroid  is a Lie algebroid together with a homogeneity structure   $\widehat{\rmh}_{t}\colon D_{k+1} \rightarrow D_{k+1}$ such that  for  any $t\neq 0$ the homogeneity structure acts as  a  Lie algebroid automorphism, and for $t=0$  the homogeneity structure is a Lie algebroid morphism to the underlying non-graded Lie algebroid. Thus we can use Lie~II to uniquely integrate $\widehat{\rmh}_{t}$ (for a~given~$t$) to a Lie groupoid morphism
\begin{gather*}
\rmh_{t}\colon \  \Gamma \rightarrow \Gamma,
\end{gather*}
 such that ($\rmh_{t})' = \widehat{\rmh}_{t}$.
 The corresponding action $\rmh\colon {\mathbb R}\times\Gamma\to\Gamma$ is smooth. Indeed, the smooth action $\widehat{\rmh}$ on the Lie algebroid $D_{k+1}$ induces clearly a smooth action on the Banach manifold of $C^1$-admissible paths in $D_{k+1}$ whose projection on~$\Gamma$ gives~$\rmh$.
The uniqueness of the integration of Lie algebroid morphisms   together with the functorial properties of dif\/ferentiating Lie groupoids,  namely $(\rmh_{t} \circ \rmh_{s})'= \rmh'_{t} \circ \rmh'_{s}$, implies that  $\rmh\colon \mathbb{R} \times  \Gamma \rightarrow \Gamma$ is a homogeneity structure. Thus $\Gamma \rightrightarrows B$ is a weighted Lie groupoid.

 The only remaining question is that of the degree of the homogeneity structure~$\rmh$. This in fact follows directly from the Lie functor. The degree of the homogeneity structure is a local question and so we can examine everything in local coordinates.  In particular, on $\Gamma \rightrightarrows B$ we can employ local  coordinates adapted to the source f\/ibration
\begin{gather*}
\big(x^{\alpha}_{u},  Y_{u}^{I}\big),
\end{gather*}
\noindent for $0 \leq u \leq k'$, for some $k'$ to be determined. As the homogeneity structure commutes with the source, for simplicity we can assume that the weight of both coordinates run over the same range.  Following the construction of the associated Lie algebroid, we equip $\sT \Gamma$ with homogeneous coordinates
\begin{gather*}
\big(\underbrace{x^{\alpha}_{u}}_{(u,0)},  \underbrace{Y_{u}^{I}}_{(u,0)},  \underbrace{\dot{x}^{\beta}_{u+1}}_{(u,1)},  \underbrace{\dot{Y}_{u+1}^{J}}_{(u,1)} \big),
 \end{gather*}
where we have applied the tangent lift of the homogeneity structure to def\/ine the weight of the coordinates on the tangent bundle. The second component is simply the natural weight induced by the vector bundle structure. Locally we obtain $D_{k+1} \rightarrow B_{k}$ by setting $\dot{x} =0$ and $Y =0$.

Note that as $D_{k+1}$ is a graded bundle of degree $k$ (forget the linear structure) and that $(x, \dot{Y})$ give homogeneous coordinates on the weighted Lie algebroid $D_{k+1} \subset \sT \Gamma$. The tangent bundle of a graded bundle of degree $k'$ is also a graded bundle of degree $k'$ (again forgetting the linear structure).  Meaning that the weight of the coordinates $\dot{Y}$ on the algebroid are directly inherited from the coordinates $Y$ on the groupoid.  Thus we conclude that $k' = k$.   Upon setting $\Gamma = \Gamma_{k}$ we have established the theorem.
\end{proof}

\begin{Remark}
The results of  Bursztyn  et al.~\cite{Bursztyn:2014} on integration of $\mathcal{VB}$-algebroids rely on some technical issues regarding regular homogeneity structures. In particular it is some work to show  that regularity is preserved under integration.  By considering the larger category of graded bundles and not just vector bundles we bypass this technicality. The fact that integration of a weighted Lie algebroid produces a weighted Lie groupoid is quite clear; the degree of the integrated homogeneity structure then follows from the graded bundle structure of tangent bundle of the integration Lie groupoid rather than any dif\/f\/icult technical arguments.  The philosophy is that graded bundles give us a `little more room to manoeuvre'  as compared with just vector bundles.
\end{Remark}

Because $\textnormal{A}(\rmh_{0}(\Gamma_{k})) = \widehat{\rmh}_{0}(D_{k+1})$, it is clear that the integrability of $D_{k+1}\rightarrow B_{k}$ as a Lie algebroid implies that $A \rightarrow M$, where $A = \widehat{\rm h}_{0}(D_{k+1})$ is also integrable as a Lie algebroid.  The converse is not true and examples of $\mathcal{VB}$-algebroids with integrable base Lie algebroids for which the total Lie algebroid is not integrable exist, see \cite{Brahic:2014,Bursztyn:2014}.

\section{Weighted Poisson--Lie groupoids and their Lie theory}\label{sec:Weighted Poisson Groupoids}
Following our general ethos that we can construct weighted versions of our favorite geometric structures by simply including a compatible homogeneity structure, we now turn to the notion of weighted Poisson--Lie groups and weighted Lie bi-algebroids.  As the propositions in this section follow by mild adaptation of standard proofs by including weights into considerations, we will not generally present details of the proofs.

\subsection{Weighted Poisson--Lie groupoids}

\begin{Definition}
A \emph{weighted Poisson--Lie groupoid} $(\Gamma_{k}, \Lambda)$ of degree $k$ is a Poisson--Lie groupoid equipped with a multiplicative homogeneity structure $\underline{\rmh}\colon  \mathbb{R}\times \Gamma_{k} \rightarrow \Gamma_{k}$ of degree~$k$, such that the Poisson structure is of weight~${-}k$.
\end{Definition}

Recall that a Poisson--Lie groupoid is a Lie groupoid equipped with a multiplicative Poisson structure; this means that
\begin{gather*}
\textnormal{graph}(\textnormal{mult}) = \{(g,h,  g\circ h) \,| \, \underline{s}(g) = \underline{t}(h)  \}
\end{gather*}
 is coisotropic inside $ ( \Gamma_{k} \times \Gamma_{k}, \times \Gamma_{k}, \Lambda\oplus \Lambda \oplus {-}\Lambda )$.  The associated Poisson bracket on $C^{\infty}(\Gamma_{k})$ is of weight ${-}k$.
Weighted Poisson--Lie groupoids should be thought of as the natural generalisation on $\mathcal{PVB}$-groupoids as f\/irst def\/ined by Mackenzie \cite{Mackenzie:1999}. A  $\mathcal{PVB}$-groupoid is a $\mathcal{VB}$-groupoid equipped with a compatible Poisson structure.

An alternative, yet equivalent and illuminating way to view weighted Poisson--Lie groupoids is as follows. Let $(G, \Lambda)$ be a Lie groupoid equipped with a Poisson structure, we will temporarily forget the graded structure and place no conditions on the Poisson structure.  Associated with the Poisson structure is the f\/ibre-wise map
\begin{gather*}
\Lambda^{\#}\colon \  \sT^{*}G \rightarrow \sT G.
\end{gather*}
 This f\/ibre-wise map is of course completely independent of the groupoid structure on $G$.
Note, however, that if  $G$ is a Lie groupoid, then both $\sT^{*}G \rightrightarrows \textnormal{A}^{*}(G)$ and $\sT G \rightrightarrows \sT M$ are naturally Lie groupoids~\cite{Mackenzie2005}.  Then one can then def\/ine a Poisson--Lie groupoid as a Lie groupoid equipped with a Poisson structure such that the induced map $\Lambda^{\#}$ is a morphism of Lie groupoids.

The only extra requirement  for the case of a weighted Poisson--Lie groupoid is that $ \Lambda^{\#}\colon \sT^{*}\Gamma_{k}$ $\rightarrow \sT \Gamma_{k}$ must be a morphisms of double graded bundles, and this forces the Poisson structure to be of weight~${-}k$. Furthermore note that $\sT^{*}\Gamma_{k}$ and $\sT \Gamma_{k}$ are both $\mathcal{GL}$-bundles, cf.\ Section~\ref{sec:Preliminaries}.

\begin{Example}
Rather trivially, any weighted Lie groupoid of degree $k$ is weighted Poisson--Lie groupoid of degree $k$ when equipped with the zero Poisson structure.
\end{Example}

\begin{Example}
Any Poisson--Lie groupoid can be considered as a weighted Poisson--Lie groupoid of degree $0$ by equipping it with the trivial homogeneity structure. Lie groups equipped with  multiplicative Poisson structures are specif\/ic examples.
\end{Example}

\begin{Example}
Consider a graded bundle of degree $k$ equipped with a Poisson structure of weight $-k$, $(F_{k}, \Lambda)$. The pair groupoid $F_{k}\times F_{k} \rightrightarrows F_{k}$ is a weighted Poisson--Lie groupoid of degree $k$ if we equip $F_{k}\times F_{k}$ with the `dif\/ference' Poisson structure.
\end{Example}

\begin{Example}
Consider a Poisson--Lie groupoid $(\mathcal{G} \rightrightarrows M, \pi)$, which we can consider as a~trivial weighted Poisson--Lie groupoid. Recall that $\sT^{k}\mathcal{G} \rightrightarrows \sT^{k}M$ is a weighted Lie groupoid of degree~$k$. Furthermore, in~\cite{KouotchopWamba:2013} it was shown that the higher tangent lift of a multiplicative Poisson structure is  a multiplicative Poisson structure itself. By inspection we see that Poisson structure lifted to the $k$-th order tangent bundle $ \Lambda := \rmd_{\sT}^{k}\pi$ is of weight $-k$, with respect to the natural homogeneity structure on the higher tangent bundle. Thus $(\sT^{k}\mathcal{G} \rightrightarrows \sT^{k}M, \rmd_{\sT}^{k}\pi )$ is a~weighted Poisson--Lie groupoid of degree~$k$. This example we consider  as the archetypal weighted Poisson--Lie groupoid.
\end{Example}

\subsection{Weighted Lie bi-algebroids}

Follow Roytenberg \cite{Roytenberg:2001}, Voronov \cite{Voronov:2001qf} and Kosmann-Schwarzbach  \cite{Kosmann-Schwarzbach:1995}  we are lead to the following def\/inition.

\begin{Definition}
A pair of weighted Lie algebroids $(D_{k}, D_{k}^{*})$  both of degree $k$  is a \emph{weighted Lie bi-algebroid} if and only if their exists odd Hamiltonians $\mathcal{Q}_{D_{k}}$ and $S_{D_{k}^{*}}$  on $\sT^{*}(\Pi D_{k})$ of weight $(k-1, 2,1)$ and $(k-1, 1,2)$, respectively,  that Poisson commute.
\end{Definition}

Let us employ homogeneous local coordinates
\begin{gather*}
\big( \underbrace{x_{u}^{\alpha}}_{(u,0,0)},  \underbrace{\theta_{u+1}^{I}}_{(u,1,0)},   \underbrace{p_{\beta}^{u+2}}_{(u,1,1)},   \underbrace{\chi_{J}^{u+1}}_{(u,0,1)} \big),
\end{gather*}
on $\sT^{*}(\Pi D_{k})$. In these homogeneous coordinates we have
\begin{gather*}
\mathcal{Q} = \theta_{u-u'+1}^{I} Q_{I}^{\alpha}[u'](x)p_{\alpha}^{k+1-u} + \frac{1}{2} \theta_{u''-u'+1}^{I} \theta_{u-u''+1}^{J}Q_{JI}^{K}[u'](x) \chi_{K}^{k-u},\\
S  = \chi_{I}^{u-u'+1} Q^{I \alpha}[u'](x) p_{\alpha}^{k+1-u} + \frac{1}{2}\chi_{I}^{u''-u'+1}\chi_{J}^{u-u''+1}Q_{K}^{JI}[u'](x)\theta_{k-u}^{K}.
\end{gather*}
Since we have a Schouten structure here of tri-weight $(k-1,1,2)$, we have a graded f\/ibre-wise map
\begin{gather*}
S^{\#} \colon \ \sT^{*}(\Pi D_{k}) \longrightarrow \Pi \sT(\Pi D_{k}),
\end{gather*}
given in local coordinates by
\begin{gather*}
(S^{\#})^{*}\rmd x^{\alpha}_{u+1}  =  \frac{\partial S}{\partial p_{\alpha}^{k+1-u}  }  ,\qquad
  (S^{\#})^{*} \rmd\theta_{u+2}^{I}  =   \frac{\partial S }{\partial \chi_{\alpha}^{k-u}},
\end{gather*}
where we have employed homogeneous local coordinates
\begin{gather*}
\big( \underbrace{x_{u}^{\alpha}}_{(u,0,0)},   \underbrace{\theta_{u+1}^{I}}_{(u,1,0)},   \underbrace{\rmd x^{\beta}_{u+1}}_{(u,0,1)},  \underbrace{\rmd\theta^{J}_{u+2}}_{(u,1,1)} \big),
\end{gather*}
on $\Pi\sT(\Pi D_{k})$.

\begin{Proposition}
Let $(D_{k}, D_{k}^{*})$ be a weighted Lie  bi-algebroid of degree~$k$. The fibre-wise map~$S^{\#}$ is a graded morphisms of $Q$-manifolds
\begin{gather*}
\big(\sT^{*}(\Pi D_{k}),   Q_{0} :=  \{\mathcal{Q} , \bullet \} \big) \longrightarrow \big( \Pi \sT(\Pi D_{k}),   \mathcal{L}_{Q} = [\rmd, i_{Q}] \big).
\end{gather*}
\end{Proposition}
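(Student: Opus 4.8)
The statement is the weighted counterpart of the classical fact that, for a Lie bi-algebroid, the sharp map of the Schouten structure is a chain map (Roytenberg, Voronov); the only genuinely new content is the compatibility of this map with the triple grading, so the plan is to run the standard verification while tracking weights. I would first record that both fields are homological: $Q_0=\{\mathcal{Q},\bullet\}$ satisfies $Q_0^2=0$ because $\{\mathcal{Q},\mathcal{Q}\}=0$, and $\mathcal{L}_Q=[\rmd,i_Q]$ satisfies $\mathcal{L}_Q^2=\frac{1}{2}\mathcal{L}_{[Q,Q]}=0$ since $Q$ is homological. To show $S^{\#}$ is a morphism of $Q$-manifolds I must establish the intertwining identity $(S^{\#})^{*}\circ\mathcal{L}_Q=Q_0\circ(S^{\#})^{*}$ on $C^\infty(\Pi\sT(\Pi D_k))$. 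Both sides are odd derivations over the homomorphism $(S^{\#})^{*}$, so it is enough to compare them on the generators: the base coordinates $x^\alpha_u,\theta^I_{u+1}$ and the fibre coordinates $\rmd x^\beta_{u+1},\rmd\theta^J_{u+2}$.

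On the base coordinates the check is immediate. As $S^{\#}$ is fibre-wise and covers the identity of $\Pi D_k$, its pullback fixes $x$ and $\theta$; on functions of $(x,\theta)$ alone $\mathcal{L}_Q$ acts as the vector field $Q$ and $Q_0$ acts as $\{\mathcal{Q},\bullet\}$, and these coincide because $\mathcal{Q}$ is the symbol of $Q$, so $\{\mathcal{Q},x^\alpha\}=\partial\mathcal{Q}/\partial p_\alpha=Q(x^\alpha)$ and $\{\mathcal{Q},\theta^I\}=\partial\mathcal{Q}/\partial\chi_I=Q(\theta^I)$.

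The heart of the proof is the check on the fibre coordinates, and this is where the bi-algebroid condition $\{\mathcal{Q},S\}=0$ is used. Using the canonical brackets $\{x^\alpha,p_\beta\}=\delta^\alpha_\beta$ and $\{\theta^I,\chi_J\}=\delta^I_J$, the defining formulae rewrite as $(S^{\#})^{*}\rmd x^\alpha=\partial S/\partial p_\alpha=\pm\{x^\alpha,S\}$, and similarly for $\theta$. Applying $Q_0$ and using the graded Jacobi identity yields
\begin{gather*}
Q_0\big((S^{\#})^{*}\rmd x^\alpha\big)=\pm\big\{\mathcal{Q},\{x^\alpha,S\}\big\}=\pm\big\{\{\mathcal{Q},x^\alpha\},S\big\}\pm\big\{x^\alpha,\{\mathcal{Q},S\}\big\},
\end{gather*}
whose last term vanishes by $\{\mathcal{Q},S\}=0$. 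Since $\{\mathcal{Q},x^\alpha\}=Q(x^\alpha)$ depends only on $(x,\theta)$, the surviving term $\{Q(x^\alpha),S\}$ is precisely the chain-rule expansion of $(S^{\#})^{*}\rmd\big(Q(x^\alpha)\big)$, which equals $(S^{\#})^{*}\mathcal{L}_Q\,\rmd x^\alpha$ because $\mathcal{L}_Q$ commutes with $\rmd$. The same computation with $\theta^I$ in place of $x^\alpha$ finishes the intertwining, proving that $S^{\#}$ is a morphism of $Q$-manifolds. I expect the main obstacle here to be purely a matter of bookkeeping: keeping the Koszul signs consistent through the Jacobi identity and through the identification of the de Rham chain rule with the bracket of a base function against $S$.

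It remains to confirm that $S^{\#}$ is a morphism of triple graded bundles, i.e.\ that $(S^{\#})^{*}$ relates the respective weight vector fields. This is a weight count: $S$ has tri-weight $(k-1,1,2)$, so differentiating it by $p_\alpha^{k+1-u}$ (tri-weight $(k-1-u,1,1)$) and by $\chi_I^{k-u}$ (tri-weight $(k-u-1,0,1)$) gives functions of tri-weights $(u,0,1)$ and $(u,1,1)$, matching exactly the tri-weights of $\rmd x^\alpha_{u+1}$ and $\rmd\theta^I_{u+2}$; the base coordinates are fixed and hence weight-preserved. Thus $(S^{\#})^{*}$ is homogeneous on generators, $S^{\#}$ relates the weight vector fields, and together with the intertwining established above this exhibits $S^{\#}$ as the desired graded morphism of $Q$-manifolds.
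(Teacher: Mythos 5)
Your proposal is correct and is exactly the ``direct computation'' that the paper invokes but omits: the paper's proof of this proposition consists of the single sentence that it follows by direct computation, with no details given. Your argument --- reducing to generators since both composites are odd derivations along $(S^{\#})^{*}$, using the graded Jacobi identity together with $\{\mathcal{Q},S\}=0$ on the fibre coordinates, identifying $\{Q(x^{\alpha}),S\}$ with the chain-rule expansion of $(S^{\#})^{*}\rmd(Q(x^{\alpha}))$, and then verifying the tri-weights on generators --- supplies precisely the missing details, and the weight count matches the paper's coordinate conventions.
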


\begin{proof}
It follows via direct computation and so we omit details.
\end{proof}

\begin{Remark} The above proposition holds for general $QS$-manifolds and the additional gradings play no critical role. In fact, we do not need a Schouten structure, but just an odd quadratic Hamiltonian such that $\mathcal{L}_{Q}S:= \{\mathcal{Q}, S \} =0$.
\end{Remark}

\begin{Example}
If $(\textnormal{A}, \textnormal{A}^{*})$ is a Lie bi-algebroid, then $(\sT^{k}\textnormal{A}, \sT^{k}\textnormal{A}^{*})$ is a Lie bi-algebroid of degree $k+1$. See for details~\cite{KouotchopWamba:2013}, although  the authors do not make mention of the graded structure.
\end{Example}

\begin{Example}
Consider a weighted Lie algebroid $D_{k}$ of degree $k$, together with an even function $\mathcal{P} \in C^{\infty}(\Pi D_{k}^{*})$ of bi-weight $(k-1, 2)$ such that $\SN{\mathcal{P}, \mathcal{P}} =0$, where the bracket is the Schouten bracket encoding the Lie algebroid structure. Such functions we consider as Poisson structures on weighted Lie algebroids. Then, following well-known classical results, $(D_{k}, D_{k}^{*})$ is a weighted Lie bi-algebroid of degree $k$. This example is the weighted version of a triangular Lie bi-algebroid. As a canonical example, $(\sT(\sT^{k-1}M), \sT^{*}(\sT^{k-1}M))$ is a weighted Lie bi-algebroid of degree $k$.  The homological vector f\/ield is supplied by the de Rham dif\/ferential on $\Pi \sT(\sT^{k-1}M)$ and the Schouten structure is that associated with the canonical Schouten bracket on $\Pi \sT^{*}(\sT^{k-1}M)$.
\end{Example}

\subsection[The Lie theory of weighted Poisson--Lie groupoids and weighted Lie bi-algebroids]{The Lie theory of weighted Poisson--Lie groupoids\\ and weighted Lie bi-algebroids}

It is well known that the inf\/initesimal object associated with a Poisson--Lie groupoid is a Lie bi-algebroid. Let us quickly sketch the association of a Lie bi-algebroid with a Poisson--Lie groupoid. The construction of the underlying Lie algebroid is as standard, we only need to show how the appropriate Schouten  structure is generated and that it carries the correct weight. With this in mind, let us outline the classical construction.

It can be shown that there are two natural isomorphisms
\begin{gather*}
 \Pi \vartheta \colon \  \Pi \textnormal{A}(\sT^{*}G)  \longrightarrow \sT^{*}(\Pi \textnormal{A}(G)), \qquad
 \Pi j \colon \ \Pi \sT (\Pi\textnormal{A}(G))  \longrightarrow  \Pi \textnormal{A}(\sT G),
\end{gather*}
 which follow from application of the parity reversion functor to the standard non-super isomorphisms (see \cite{Mackenzie2005, Mackenzie:2000} and \cite{KouotchopWamba:2013}, where the higher tangent versions are also discussed).  Let $(G, \Lambda)$ be a Poisson--Lie groupoid, then as we have  $\Lambda^{\#}\colon  \sT^{*}G \rightarrow \sT G$, we also have  the `superised' version:
\begin{gather*}
\Pi ( \Lambda^{\#})' \colon \ \Pi \textnormal{A}(\sT^{*}G) \rightarrow \Pi \textnormal{A}(\sT G).
\end{gather*}
Furthermore, the above gives  rise to a Schouten structure on $\textnormal{A}$ viz
\begin{gather*}
\Pi (\Lambda^{\#})' := \Pi j \circ S^{\#} \circ \Pi \vartheta,
\end{gather*}
where $S^{\#}\colon \sT^{*} (\Pi \textnormal{A}(G)) \longrightarrow \Pi \sT(\Pi \textnormal{A}(G))$. One now only has to check the weight when dealing with the weighted versions.

 In the other direction, it is also well known an integration procedure for passing from  Lie bi-algebroids to Poisson--Lie groupoids.  Using Lie's second theorem one can show that if $(A, A^{*})$ is a Lie bi-algebroid  and $G \rightrightarrows M$ is the source simply-connected Lie groupoid integrating $A \rightarrow M$, then there exists a unique Poisson structure on $G$ such that $(G,\Lambda)$ is a Poisson--Lie groupoid integrating $(A, A^{*})$. Again, it is straightforward to check the weights when dealing with the weighted versions. We are lead to the following proposition.
\begin{Proposition}
There is a canonical correspondence between weighted Poisson--Lie groupoids of degree~$k$ and
$($integrable$)$ weighted Lie bi-algebroids of degree~$k+1$.
\end{Proposition}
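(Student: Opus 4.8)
The plan is to deduce the correspondence from the classical Mackenzie--Xu equivalence between Poisson--Lie groupoids and Lie bi-algebroids, superimposing on it the weighted differentiation and integration results already established in Section~\ref{sec:Lie theory}. Since the underlying ungraded correspondence and the generation of the Schouten structure are in place classically, the entire content of the proof is the compatibility of the homogeneity structures together with the bookkeeping of their degrees. I would therefore invoke the classical statements as black boxes and concentrate all the work on the weights, treating the two directions separately and then noting that they are mutually inverse exactly as in the classical theory for source simply-connected objects.

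For the differentiation direction I would start from a weighted Poisson--Lie groupoid $(\Gamma_k,\Lambda)$ of degree $k$. Theorem~\ref{thm:groupoids to algebroids} already produces the weighted Lie algebroid $\rmA(\Gamma_k)$ of degree $k+1$, so the underlying weighted Lie algebroid of the prospective bi-algebroid is supplied with the correct degree. For the cobracket I would feed the hypothesis that $\Lambda$ has weight $-k$ into the classical construction sketched above: the map $\Lambda^{\#}\colon \sT^{*}\Gamma_k\to\sT\Gamma_k$ is then a morphism of double graded bundles, so its parity-reversed differential $\Pi(\Lambda^{\#})'$ composed with the canonical isomorphisms $\Pi\vartheta$ and $\Pi j$ yields the fibre-wise map $S^{\#}$, hence a Schouten structure $S$ on $\rmA(\Gamma_k)$. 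That $(\rmA,\rmA^{*})$ is a Lie bi-algebroid, i.e.\ $\mathcal{L}_{Q}S=0$, is the classical output; what must be verified is that the shift by $-k$ in $\Lambda$, the degree shift $k\mapsto k+1$ carried by the Lie functor, and the shifts built into the phase lift and into $\Pi\vartheta$, $\Pi j$ combine so that $S$ acquires exactly the tri-weight $(k,1,2)$ demanded of the Schouten structure of a weighted Lie bi-algebroid of degree $k+1$. This is a direct computation in the adapted homogeneous coordinates on the $\mathcal{GL}$-bundles $\sT^{*}\Gamma_k$ and $\sT\Gamma_k$.

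Conversely, given an integrable weighted Lie bi-algebroid $(D_{k+1},D_{k+1}^{*})$ of degree $k+1$, the integration theorem for weighted Lie algebroids applied to $D_{k+1}$ produces a source simply-connected weighted Lie groupoid $\Gamma_k$ of degree $k$, whose homogeneity structure $\rmh$ is the one uniquely determined by~\eqref{wh}. The classical integration of bi-algebroids then endows $\Gamma_k$ with a unique multiplicative Poisson structure $\Lambda$ integrating the pair, so that $(\Gamma_k,\Lambda)$ is a Poisson--Lie groupoid, and it remains only to check that $\Lambda$ is homogeneous of weight $-k$, equivalently that it is compatible with $\rmh$.

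This last point is the one step that is not pure bookkeeping, and I would settle it by a uniqueness argument rather than in coordinates. For each $t\neq 0$ the map $\rmh_t$ is a Lie groupoid automorphism integrating the weighted Lie algebroid automorphism $\widehat{\rmh}_t=(\rmh_t)'$, so both $\rmh_t^{*}\Lambda$ and the rescaling $t^{-k}\Lambda$ are multiplicative Poisson structures on $\Gamma_k$. By naturality of the construction $\Lambda\mapsto S$ under the automorphism $\rmh_t$, together with the homogeneity of $S$ prescribed by the definition of a weighted Lie bi-algebroid, these two multiplicative structures differentiate to one and the same Schouten structure on $\rmA(\Gamma_k)$. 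Since the source simply-connected integration of a Lie bi-algebroid is unique, they must coincide, giving $\rmh_t^{*}\Lambda=t^{-k}\Lambda$ for all $t\neq 0$ and, by smoothness of $\rmh$, also in the limit $t=0$; hence $\Lambda$ has weight $-k$. Combined with the fact that differentiation and integration are mutually inverse on source simply-connected objects in the classical theory, this yields the claimed correspondence. The main obstacle throughout is exactly the calibration of weights so that the two constructions land in the prescribed degrees; once the homogeneity structures are known to be multiplicative, the uniqueness step above is what converts this into a single homogeneity identity for the Schouten structure $S$.
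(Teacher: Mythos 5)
Your proposal is correct and takes essentially the same route as the paper: both directions rest on the classical Mackenzie--Xu correspondence (the Schouten structure produced from $\Lambda^{\#}$ via $\Pi\vartheta$, $\Pi j$ and $S^{\#}$ for differentiation, Lie~II for integration), with the only genuinely new content being the bookkeeping of weights against the weighted Lie theory of Section~\ref{sec:Lie theory}. Your uniqueness argument for the homogeneity of $\Lambda$ is a concrete implementation of what the paper dismisses as ``straightforward to check the weights,'' and it mirrors the Lie~II-uniqueness technique the paper itself employs when integrating the homogeneity structure of a weighted Lie algebroid.
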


\subsection{The Courant algebroid associated with weighted Lie bi-algebroids}

Following our general ethos,  a weighted Courant algebroid is a Courant algebroid (see  \cite{Courant:1990,Liu:1997}) with an additional compatible homogeneity structure.
\begin{Definition}
A \emph{weighted Courant algebroid of degree $k$} consists of the following data:
 \begin{enumerate}\itemsep=0pt
\item A double graded supermanifold $({\mathcal M},\rmh,\rml)$ of bi-degree $(k-1,2)$ such that $({\mathcal M}, \rml)$ is an $N$-manifold (the Grassmann parity of homogeneous coordinates is determined by $\rml$).
\item An even  symplectic form  $\omega$ on $\mathcal{M}$  of bi-degree $(k-1,2)$.
\item An odd function $\Theta \in C^{\infty}(\mathcal{M})$ of bi-degree $(k-1,3)$, such that $\{\Theta, \Theta \}_\omega =0$, where the bracket is the Poisson bracket induced by~$\omega$.
\end{enumerate}
 The corresponding derived bracket
\begin{gather*}
\SN{\sigma^{1}, \sigma^{2}}_\Theta :=  \big\{\big\{ \sigma^1, \Theta \big\}_\omega, \sigma^{2} \big\}_\omega
\end{gather*}
is closed on functions of bi-degree $(k-1,1)$ (so functions of total degree $k$) and def\/ines the higher analog of the Courant--Dorfman bracket.
\end{Definition}

Our motivating example is associated with an arbitrary  weighted Lie bi-algebroid $(D_{k},D_{k}^{*})$. Let us `collapse' the tri-weight on $\sT^{*}(\Pi D_{k})$ to a bi-weight by taking the sum of the last two weight vector f\/ields. Then, the Hamiltonian vector f\/ield
\begin{gather*}
Q_{\lambda} :=  \{ \mathcal{Q} + \lambda   S, \bullet  \} \in \Vect(\sT^{*}(\Pi D_{k}))
\end{gather*}
 is of bi-weight $(0,1)$.  Here we take $\lambda \in \mathbb{R}$ to be some free parameter (carrying no weight) and so we have a pencil of such vector f\/ields. Clearly $Q_{\lambda}$ is a homological vector f\/ield.  Moreover, as~$Q_{\lambda}$ is a Hamiltonian vector f\/ield its Lie derivative annihilates the canonical symplectic structure on~$\sT^{*}(\Pi D_{k})$.   Note that the canonical symplectic form has natural bi-weight $(k-1,2)$. Thus, following Roytenberg~\cite{Roytenberg:2001}, we have  objects  that deserve to be called \emph{weighted Courant algebroids} of degree $k$.

To uncover the natural pairing, bracket and anchor structure, f\/irst note that $\Pi D_{k} \times_{B_{k-1}} \Pi D_{k}^{*}$ is a  quotient supermanifold of $\sT^{*}\Pi D_{k}$ def\/ined (locally) by projecting out the coordinates $p^{u+1}_{\alpha}$. Thus we will naturally identify functions on $\sT^{*}(\Pi D_{k})$ of bi-weight $(r-1,1)$ as homogeneous sections of  the vector bundle $D_k\oplus D^*_k=D_{k}\oplus_{B_{k-1}} D_{k}^{*}$ of degree $r$, up to a shift in the Grassmann parity.  In the homogeneous coordinates introduced above, any homogeneous section of degree $r$ is of the form
\begin{gather*}
\sigma_{r} = s_{r-u-1}^{I}(x)\chi_{I}^{u+1} + \theta_{u+1}^{I}s_{I}^{r-u-1}(x).
\end{gather*}

The \emph{natural pairing} between sections, that is a f\/ibre-wise pseudo-Riemannian structure on $D_{k}\oplus D_{k}^{*}$,  comes from the Poisson bracket on $\sT^{*}(\Pi D_{k})$,
\begin{gather*}
\langle\sigma^{1}, \sigma^{2} \rangle = \big\{ \sigma^{1}, \sigma^{2} \big\} \in C^{\infty}(B_{k-1}),
\end{gather*}
 and thus the pairing carries bi-weight $(-k+1, -2)$, or we can think of the  total weight as $-k-1$. The fact that we have identif\/ied homogeneous sections with particular odd functions on  $\sT^{*}(\Pi D_{k})$ does not ef\/fect the identif\/ication of the metric structure and the Poisson structure; everything is linear and so the parity reversion does not add any real complications here.
Let us simplify notation slightly and def\/ine  $\Theta_{\lambda} :=  \mathcal{Q} + \lambda   S$.  Then the  derived bracket, given by
\begin{gather*}
\SN{\sigma^{1}, \sigma^{2}}_{\lambda} :=  \big\{\big\{ \sigma^{1}, \Theta_{\lambda} \big\}, \sigma^{2} \big\},
\end{gather*}
carries bi-weight $(-k+1, -1)$  (so the total weight $-k$), similarly to the case of a weighted Lie algebroid. It is closed on functions of bi-degree $(k-1,1)$, so sections of the vector bundle $D_k\oplus D^*_k$ of degree~$k$. The latter is the `higher' analog of the \emph{Courant--Dorfman bracket}, which is not a Lie bracket, but rather a Loday (Leibniz) bracket. The anchor is then def\/ined as
\begin{gather*}
\rho_\lambda\colon \  \Sec( D_{k}\oplus D_{k}^{*} )  \rightarrow  \Vect(B_{k-1}),\qquad
  \rho_\lambda(\sigma)[f]  :=  \{\{ \sigma, \Theta_{\lambda} \}, f \},
\end{gather*}
 for all $f \in C^{\infty}(B_{k-1})$.

\begin{Theorem}
Via the above construction, if $(D_{k},D_{k}^{*})$ is a weighted Lie bi-algebroid of deg\-ree~$k$, then $\sT^{*}(\Pi D_{k})$ naturally has the structure of a weighted Courant algebroid of degree~$k$ $($setting $\lambda =1$ for simplicity$)$.
\end{Theorem}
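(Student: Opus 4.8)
The plan is to verify directly that the triple consisting of $\sT^{*}(\Pi D_{k})$, its canonical symplectic form, and the odd function $\Theta := \mathcal{Q} + S$ (the case $\lambda = 1$) satisfies the three defining conditions of a weighted Courant algebroid of degree $k$, drawing the homological master equation from the weighted Lie bi-algebroid axioms and every degree statement from the collapse of the tri-weight to a bi-weight. First I would fix the double graded structure of item (1). The coordinates on $\sT^{*}(\Pi D_{k})$ already carry the tri-weights $(u,0,0)$, $(u,1,0)$, $(u,1,1)$, $(u,0,1)$ on $x^{\alpha}_{u}$, $\theta^{I}_{u+1}$, $p^{u+2}_{\beta}$, $\chi^{u+1}_{J}$; summing the last two weight vector fields collapses these to $(u,0)$, $(u,1)$, $(u,2)$, $(u,1)$. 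Since $0 \le u \le k-1$ and the second component reaches $2$ only on the momenta $p$, the resulting $(\mathcal{M}, \rmh, \rml)$ is a double graded supermanifold of bi-degree $(k-1,2)$, and the collapsed linear weight $\rml$ equals $1$ exactly on the odd coordinates $\theta,\chi$ and $2$ on the even $p$, so $\rml$ induces the Grassmann parity and $(\mathcal{M},\rml)$ is an $N$-manifold.

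Next I would treat items (2) and (3). For $\omega$ I take the canonical symplectic form of the cotangent bundle, whose induced Poisson bracket $\{\cdot,\cdot\}_\omega$ is the canonical bracket used throughout. Inspecting the conjugate pairs $(x^{\alpha}_{u}, p)$ and $(\theta^{I}_{u+1}, \chi)$ shows each pairing has constant tri-weight $(k-1,1,1)$, hence collapsed bi-weight $(k-1,2)$; being an even symplectic form this gives item (2). For $\Theta = \mathcal{Q} + S$, the tri-weights $(k-1,2,1)$ of $\mathcal{Q}$ and $(k-1,1,2)$ of $S$ both collapse to $(k-1,3)$, so $\Theta$ is odd of bi-degree $(k-1,3)$. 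The master equation then follows by expanding $\{\Theta,\Theta\}_\omega = \{\mathcal{Q},\mathcal{Q}\} + 2\{\mathcal{Q},S\} + \{S,S\}$ in the canonical bracket: the three summands vanish respectively because $\mathcal{Q}$ is the symbol of a homological vector field, $S$ is a Schouten structure, and $\{\mathcal{Q},S\}=0$ is precisely the Poisson-commutation built into the definition of a weighted Lie bi-algebroid.

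Finally I would confirm closedness of the derived bracket. Since $\omega$ has bi-weight $(k-1,2)$, the canonical bracket shifts bi-weight by $-(k-1,2)$; pairing twice against $\Theta$ of bi-weight $(k-1,3)$ produces the net operator shift $(-k+1,-1)$ already recorded, so $\SN{\sigma^{1},\sigma^{2}}_{\Theta} = \{\{\sigma^{1},\Theta\}_\omega,\sigma^{2}\}_\omega$ is again of bi-degree $(k-1,1)$ whenever $\sigma^{1},\sigma^{2}$ are, i.e.\ the bracket is closed on degree-$k$ sections of $D_{k}\oplus D^{*}_{k}$.

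The genuine content is minimal: everything reduces to bookkeeping of the tri-weights and their collapse, plus the single identity $\{\Theta,\Theta\}_\omega=0$ handed to us by the bi-algebroid axioms. The one point demanding care, and hence the main obstacle, is confirming that the canonical symplectic form really carries bi-weight $(k-1,2)$ after collapsing, i.e.\ that the phase-lift weights assigned to the momenta $p$ and $\chi$ are exactly those making the conjugate pairings homogeneous of constant weight; once this is pinned down, all three axioms and the closedness statement follow by inspection, so I would present the argument as a verification and omit the routine coordinate computations.
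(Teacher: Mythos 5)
Your proposal is correct and follows essentially the same route as the paper: the paper's "proof" is precisely the construction preceding the theorem, namely collapsing the tri-weight to a bi-weight, observing that the canonical symplectic form has bi-weight $(k-1,2)$, that $\Theta_{\lambda}=\mathcal{Q}+\lambda S$ has collapsed bi-degree $(k-1,3)$, and that $\{\Theta_{\lambda},\Theta_{\lambda}\}=0$ follows from the three $QS$-manifold identities built into the weighted Lie bi-algebroid definition. Your verification merely makes the weight bookkeeping (conjugate pairs summing to tri-weight $(k-1,1,1)$, the $N$-manifold parity check, and the closedness of the derived bracket on bi-degree $(k-1,1)$ functions) explicit where the paper leaves it implicit.
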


\begin{Example}
The $k=1$ case is just that of the  Courant structure associated with a standard Lie bi-algebroid. The case of $k=2$ gives an example of a   $\mathcal{VB}$-Courant algebroid.
\end{Example}

\begin{Example}
As any weighted Lie algebroid $D_{k}$ can be considered as a weighted Lie bi-algebroid with the trivial structure on the dual bundle, $\sT^{*}(\Pi D_{k})$ can be considered as a weighted Courant algebroid with the obvious homological vector f\/ield.
\end{Example}

\begin{Example}
If we take $D_{k} = \sT(\sT^{k-1}M)$ then we have the natural structure of a weighted Courant algebroid given in local coordinates as
\begin{gather*}
Q =  \big\{ \theta_{u+1}^{\alpha} p_{\alpha}^{k+1-u}, \bullet \big\}= \theta_{u+1}^{\alpha}\frac{\partial}{\partial x^{\alpha}_{u} }  +   p_{\alpha}^{u+1}\frac{\partial }{\partial \chi_{\alpha}^{u+1} }  \in \Vect\big(\sT^{*}\big(\Pi \sT \big(\sT^{k-1} M\big)\big)\big).
\end{gather*}
  This example should be considered as an natural generalisation of the canonical Courant algebroid on the generalised tangent bundle $\sT M \oplus \sT^{*}M$, which is a substructure of $\sT(\sT^{*}M)$.
\end{Example}

\subsection{Remarks on contact and Jacobi groupoids}

The notion of a \emph{weighted symplectic groupoid} is clear: it is just a weighted Poisson groupoid with an invertible Poisson, thus symplectic, structure.
By replacing  the homogeneity structure, i.e., an action of the monoid ${\mathbb R}$ of multiplicative reals, with a smooth action of its subgroup $\mathbb{R}^{\times}={\mathbb R}{\setminus}\{ 0\}$, one obtains a principal $\mathbb{R}^{\times}$-bundle in the category of symplectic (in general Poisson)  groupoids. We get in this way the `proper', in our belief, def\/inition of a \emph{contact} (resp.\ \emph{Jacobi}) \emph{groupoid}.

This belief is based on the general and well-known \emph{credo}, presented, e.g., in~\cite{Grabowski2013}, that the geometry of contact (more generally, Jacobi) structures on a manifold $M$ is nothing but the geometry of `homogeneous symplectic' (resp.\ `homogeneous Poisson') structures on an ${\mathbb R}^\times$-principal bundle $P\to M$. The word `homogeneous' refers to the fact that the symplectic form (resp., Poisson tensor) is homogeneous with respect to the ${\mathbb R}^\times$-action.
Of course, it is pretty well known that any contact (resp.\ Jacobi) structure admits a symplectization (resp.\ poissonization) and these facts are frequently used in proving  theorems on contact and Jacobi structures (see, e.g.,~\cite{Mehta:2013}). However, in~\cite{Grabowski2013} the symplectization/poissonization was taken seriously as a genuine def\/inition of a contact (Jacobi) structure. The general \emph{credo} then says that any object related to a contact (Jacobi) structure ought to be considered  as the corresponding object in the symplectic (Poisson) category, equipped additionally with an ${\mathbb R}^\times$-principal bundle structure, compatible with other structures def\/ining the object. The only problem is what the compatibility means in various cases.

For Lie groupoids (and also Lie algebroids) is natural to expect that the condition of an ${\mathbb R}^\times$-action to be compatible with the groupoid (algebroid) structure is the same as for the action of multiplicative~${\mathbb R}$, i.e., it consists of groupoid (algebroid) isomorphisms. This is the reason why we comment on contact and Jacobi groupoids here. We present the  detailed study of contact/Jacobi groupoids in a~separate paper~\cite{Bruce:2015}.

At this point, note only  that our def\/inition of a contact groupoid turns out to be equivalent to the def\/inition of Dazord \cite{Dazord:1995}. The f\/irst and frequently used def\/inition, presented in~\cite{Kerbrat:1993}, is less general and involves an arbitrary multiplicative function, that is due to the fact that in this approach contact structures are forced to be trivial, i.e., to admit a global contact 1-form. Note that very similar idea of using `homogeneous symplectic' manifolds in the context of contact groupoids appears already in~\cite{Crainic:2007} in a slightly less general framework of ${\mathbb R}_+$-actions.

To f\/inish, we want to stress that our use of the group ${\mathbb R}^\times$ rather than ${\mathbb R}_+$ is motivated by the need of including non-trivial line bundles into the picture; there is a nice one-to-one correspondence between principal ${\mathbb R}^\times$-bundles and line bundles. In this understanding, the contact structure does not need to admit a global contact form and the Jacobi bracket is def\/ined on sections of a line bundle rather than on functions, so it is actually a \emph{local Lie algebra} in the sense of Kirillov~\cite{Kirillov:1976} or a \emph{Jacobi bundle} in the sense of Marle~\cite{Marle:1991}. This, in turn, comes from the observation (cf.~\cite[Remark~2.4]{Grabowski2013}) that the Jacobi bracket is \emph{par excellence} a Lie bracket related to a module structure, even if the regular module action of an associative commutative algebra on itself looks formally as the multiplication in the algebra. Consequently, allowing nontrivial modules (line bundles) is necessary for the full and correct picture.

\subsection*{Acknowledgements}

The authors are indebted to the anonymous referees whose comments have served to improve the content and presentation of this paper. The research of K.~Grabowska and J.~Grabowski  was funded by the  Polish National Science Centre grant under the contract number DEC-2012/06/A/ST1/00256. A.J.~Bruce graciously acknowledges the f\/inancial support of the Warsaw Centre for Mathematics and Computer Science in the form of a postdoctoral fellowship.

\pdfbookmark[1]{References}{ref}
\LastPageEnding

\end{document}